\newtheorem{theoremmy}{Theorem}[section]
\newtheorem{lemmamy}[theoremmy]{Lemma}
\newtheorem{exper}[theoremmy]{Experiment}
\newtheorem{remark}{Remark}
\numberwithin{equation}{section}
\newcommand{\mi}{\mathrm{i}}
\newcommand{\spans}{\mathrm{span}}
\newcommand{\gap}{\mathrm{gap}}
\newcommand{\sign}{\mathrm{sign}}
\newcommand{\TT}{\mathrm{T}}
\newcommand{\HH}{\mathrm{H}}
\newcommand{\FF}{\mathrm{F}}
\newcommand{\PP}{\mathcal{P}}
\newcommand{\OO}{\mathcal{O}}
\newcommand{\UU}{\mathcal{U}}
\newcommand{\VV}{\mathcal{V}}
\newcommand{\WW}{\mathcal{W}}
\newcommand{\XX}{\mathcal{X}}
\newcommand{\YY}{\mathcal{Y}}
\newcommand{\ZZ}{\mathcal{Z}}
\title{A generalized skew-symmetric Lanczos bidiagonalization method 
	for computing several extreme eigenpairs of a large 
	skew-symmetric/symmetric positive definite matrix pair
	\thanks{This work was supported by the Youth Fund of the National 
	Science Foundation of China (No. 12301485) and the Youth Program 
	of the Natural Science Foundation of Jiangsu Province (No. BK20220482).}}
\author{ Jinzhi Huang\thanks{School of Mathematical Sciences, Soochow 
		University, 215006 Suzhou, China (\url{jzhuang21@suda.edu.cn}).}
}
\begin{document}
\maketitle

\begin{abstract}
A generalized skew-symmetric Lanczos bidiagonalization (GSSLBD) 
method is proposed to compute several extreme eigenpairs 
of a large matrix pair $(A,B)$, where $A$ is skew-symmetric 
and $B$ is symmetric positive definite. 
The underlying GSSLBD process produces two sets of $B$-orthonormal 
generalized Lanczos basis vectors that are also $B$-biorthogonal 
and a series of bidiagonal matrices whose singular values are 
taken as the approximations to the imaginary parts of the 
eigenvalues of $(A,B)$ and the corresponding left and right 
singular vectors premultiplied with the left and right generalized 
Lanczos basis matrices form the real and imaginary parts of the 
associated approximate eigenvectors.  
A rigorous convergence analysis is made on the desired eigenspaces 
approaching the Krylov subspaces generated by the GSSLBD process 
and accuracy estimates are made for the approximate eigenpairs. 
In finite precision arithmetic, it is shown that the 
semi-$B$-orthogonality and semi-$B$-biorthogonality 
of the computed left and right generalized Lanczos vectors 
suffice to compute the eigenvalues accurately. 
An efficient partial reorthogonalization strategy is adapted 
to GSSLBD in order to maintain the desired semi-$B$-orthogonality and 
semi-$B$-biorthogonality. 
To be practical, an implicitly restarted GSSLBD algorithm, abbreviated 
as IRGSSLBD, is developed with partial $B$-reorthogonalizations.
Numerical experiments illustrate the robustness and overall 
efficiency of the IRGSSLBD algorithm.
\end{abstract}

\begin{keywords}
skew-symmetric matrix, symmetric positive definite matrix,
generalized eigenproblem, eigenvalue, eigenvector, 
singular value decomposition, singular value, singular vector, 
generalized skew-symmetric Lanczos bidiagonalization,
partial reorthogonalization, implicit restart
\end{keywords}

\begin{AMS}
65F15, 15A18, 65F10, 65F25
\end{AMS}

\pagestyle{myheadings}
\thispagestyle{plain}
\markboth{A  GENERALIED SKEW-SYMMETRIC LANCZOS BIDIAGONALIZATION METHOD}
{JINZHI HUANG}

\section{Introduction}\label{sec:1}
Let $A\in\mathbb{R}^{n\times n}$ and $B\in\mathbb{R}^{n\times n}$ 
be large scale and possibly sparse matrices with $A$ skew-symmetric 
and $B$ symmetric positive definite, i.e., $A^\TT=-A$, $B=B^\TT$ and 
$B\succ0$, where the superscript $\TT $ denotes the transpose of a 
matrix or vector.
We consider the generalized eigenvalue problem of $(A,B)$: 
\begin{equation}\label{eigA}
  Ax=\lambda Bx,
\end{equation}
where $\lambda\in\mathbb{C}$ is an eigenvalue of $(A,B)$ and 
$x\in\mathbb{C}^{n}$ with  $\|x\|_{B}=1$ is the 
corresponding eigenvector.  
Throughout this paper, $\|\cdot\|_B$ denotes the $B$-norm of 
a matrix or vector. 

The real skew-symmetric eigenproblems and skew-symmetric/symmetric 
generalized eigenproblems arise in various applications 
\cite{apel2002structured,del2005computation,mehrmann1991autonomous,
	mehrmann2012implicitly,penke2020high,wimmer2012algorithm,
	yan1999approximate,zhong1995direct}. 
For a small to medium sized skew-symmetric matrix $A$, several numerical 
algorithms have been developed to compute its eigendecompositions
in real arithmetic \cite{fernando1998accurately,paardekooper1971eigenvalue,
	penke2020high,ward1978eigensystem}. 
For a large scale skew-symmetric $A$, Huang and Jia \cite{huang2024skew}
make full use of the skew-symmetric structure and develop a skew-symmetric 
Lanczos bidiagonalization (SSLBD) process that successively generates two sets
of orthonormal and biorthogonal Lanczos vectors and a sequence of 
bidiagonal projection matrices of $A$ with respect to the 
Lanczos subspaces, whose extreme, i.e., largest and smallest, 
singular values are reasonable approximations to the absolute 
values of the (pure imaginary) extreme eigenvalues of $A$. 
Based on this process, they propose an implicit restart SSLBD algorithm 
for computing several extreme conjugate eigenpairs of a large skew-symmetric 
matrix in pure real arithmetic. 
For a large scale skew-symmetric/symmetric matrix pair $(A,B)$, 
Mehrmann, Schr{\"o}der and Simoncini \cite{mehrmann2012implicitly} 
present an implicit restart Krylov subspace method to compute its
several eigenpairs with the eigenvalues closest to a given target $\tau$. 
In this method, the concerned interior generalized eigenproblem 
of $(A,B)$ is mathematically transformed as the extreme eigenproblem 
of the non-specially structured matrix $K=(B+\tau A)^{-1}A(B-\tau A)^{-1}A$, 
and the Arnoldi process \cite{stewart2001matrix} is performed on 
$K$ to compute the relevant eigenpairs, 
from which the desired eigenpairs of $(A,B)$ are recovered. 
 
However, when it comes to the eigenproblem of a large scale 
skew-symmetric/symmetric positive definite matrix pair $(A,B)$, 
there still lacks a high-performance numerical algorithm that 
can fully exploit the special structure of the matrix pair 
and compute several eigenvalues and/or the associated eigenvectors 
only in real arithmetic. 
In this paper, we fill this chink by generalizing the work of 
\cite{huang2024skew} and proposing a generalized SSLBD (GSSLBD) 
method to compute the extreme eigenpairs of $(A,B)$ 
conjugate pairwise in pure real arithmetic. 

We first show that for $A$ skew-symmetric and $B$ symmetric 
positive definite, the generalized eigenproblem of $(A,B)$ 
is equivalent to the eigenproblem of a certain skew-symmetric matrix $H$. 
Based on this, we generalize the SSLBD process of 
\cite{huang2024skew} to the GSSLBD process, performing which on 
$(A,B)$ delivers two sets of $B$-orthonormal and $B$-biorthogonal 
vectors, called the left and right generalized Lanczos vectors, 
and a series of bidiagonal projection matrix of $A$ onto the Krylov 
subspaces spanned by these two sets of vectors, called the left 
and right generalized Lanczos subspaces, with respect to $B$-norm. 
Then performing the Rayleigh-Ritz projection approach, i.e., the 
standard extraction method, on the eigenvalue problem of $(A,B)$ 
with respect to the generalized Lanczos subspaces, we compute 
approximations to the absolute values of the pure imaginary 
eigenvalues of $(A,B)$ and/or approximations to the real and 
imaginary parts of the associated eigenvectors.
Once convergence occurs, we recover the desired approximate eigenpairs 
of $(A,B)$ conjugate pairwise directly from these approximations. 
The resulting method is called the GSSLBD method. 
We estimate the distance between the two-dimensional eigenspace 
associated with a conjugate eigenvalue pair of $(A,B)$ and 
the underlying Krylov subspaces with respect to $B$-norm. 
Then in term of these distance, we derive a priori error bounds for 
the computed approximate eigenvalues and the corresponding 
approximate eigenspaces. 
These results illustrate that the GSSLBD method suits well for the 
computation of extreme eigenpairs of $(A,B)$.

In finite precision arithmetic, just as the Lanczos vectors generated 
by the SSLBD process loss orthogonality and biorthogonality 
gradually, the left and right generalized Lanczos vectors produced 
by the GSSLBD process loss the $B$-orthogonality and 
$B$-biorthogonality progressively, causing the ghost phenomena of 
approximate eigenvalues, i.e., spurious approximation copies of some 
eigenvalues appear frequently. 
For the correctness and reliability of the GSSLBD method, 
$B$-reorthogonalization is absolutely necessary. 
Fortunately, we prove that the
semi-$B$-orthogonality and semi-$B$-biorthogonality of the two 
sets of generalized Lanczos vectors suffice to compute the 
eigenvalues of $(A,B)$ accurately. 
In other words, the left and right generalized Lanczos vectors  
only need to be $B$-orthonormal and $B$-biorthogonal to the 
level of $\mathcal{O}(\sqrt{\epsilon})$ with $\epsilon$ the 
machine precision. 
To this end, we generalize the partial reorthogonalization 
strategies proposed in \cite{huang2024skew,larsen1998lanczos} 
for the LBD and SSLBD processes,  
and design a partial $B$-reorthogonalization module for the GSSLBD 
process so as to maintain the desired numerical semi-$B$-orthogonality 
and semi-$B$-biorthogonality of the generalized 
Lanczos vectors. 
With the partial $B$-reorthogonalization, the GSSLBD method 
works as correctly as but more efficiently than it does with 
full $B$-reorthogonalization where all the generalized Lanczos 
vectors are $B$-reorthogonalized so that they are $B$-orthonormal 
and $B$-biorthogonal to the working precision, 
as confirmed by the numerical experiments. 
   
As the subspaces expand, the GSSLBD method becomes prohibitive 
due to the exorbitant computational cost and storage requirement. 
To be practical, it is absolutely vital to restart the method appropriately.  
Implicit restart was originally introduced by Sorensen 
\cite{sorensen1992implicit} 
for large eigenproblems and then evolved by Larsen \cite{larsen2001combining} 
and Jia and Niu \cite{jia2003implicitly,jia2010refined} for large 
singular value decomposition (SVD) computations. 
Recently, Huang and Jia \cite{huang2024skew} have adapted it to 
the computation of partial spectral decomposition of large real  
skew-symmetric matrices. 
In this paper, we customize the implicit restart technique for the 
GSSLBD method and restart the method with increasingly better initial 
subspaces when the maximum dimension of the subspaces allowed is attained.    
As a result, we propose an implicit GSSLBD algorithm with 
partial $B$-reorthogonalization for computing several extreme 
conjugate eigenpairs of a real large skew-symmetric/symmetric 
positive definite matrix pair $(A,B)$ in pure real arithmetic. 
We implement our implicit restart GSSLBD algorithm and the Matlab 
built-in function {\sf eigs} on numerous problems to compare their 
numerical performance on computing several largest conjugate 
eigenvalues in magnitude of $(A,B)$ and the corresponding 
eigenvectors, showing that our algorithm is competitive with 
and generally more robust and efficient than {\sf eigs}. 
In addition, we numerically illustrate that the implicit restart 
GSSLBD algorithm also performs well for the computation of several 
smallest conjugate eigenpairs in magnitude of $(A,B)$.
 
The rest of this paper is organized as follows. 
In section~\ref{sec2} we introduce the GSSLBD process and GSSLBD method. 
In section~\ref{sec3} we present the theoretical analysis 
on the GSSLBD method. 
Section~\ref{sec:4} is devoted to the partial $B$-reorthogonalization 
scheme and we propose an implicit restart GSSLBD algorithm. 
We report the numerical experiments in section~\ref{sec:5} and 
conclude the paper in section~\ref{sec:6}. 
 
Throughout this paper, we denote by $\mi$ the imaginary unit,
by $X^\HH$, $\|X\|$ and $\|X\|_{\FF}$ the conjugate transpose, 
$2$-norm and Frobenius norms of $X$, respectively. 
Denote by $I_{k}$ and $\bm{0}_{k,\ell}$ the identity and zero 
matrices of order $k$ and $k\times \ell$, respectively,  
the subscripts of which are omitted whenever they are 
clear from the context. 

\section{The GSSLBD method}\label{sec2}
Since $B$ is symmetric and positive definite, it has the 
following square decomposition:
\begin{equation}\label{squaredec}
	B=M^2,	
\end{equation}
where $M=B^{\frac{1}{2}}\in\mathbb{R}^{n\times n}$ is also 
symmetric and positive definite. 
The following theorem reveals the close relationship between the 
spectral decompositions of $(A,B)$ and a certain skew-symmetric matrix, 
the proof to which is straightforward and therefore omitted. 
\begin{theoremmy}\label{thm1}
	For an eigenpair $(\lambda,x)$ of $(A,B)$ where $A$ is skew-symmetric 
	and $B$ is symmetric positive definite, $(\lambda,y:=Mx)$ is 
	an eigenpair of the skew-symmetric matrix 
	\begin{equation}\label{Achek}
		H=M^{-1}AM^{-1}.
	\end{equation}
\end{theoremmy}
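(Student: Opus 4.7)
The plan is a direct algebraic substitution using the square root factorization $B=M^2$ with $M$ symmetric positive definite. Starting from the generalized eigenvalue equation $Ax=\lambda Bx$, I would first left-multiply by $M^{-1}$ to obtain $M^{-1}Ax = \lambda M^{-1}Bx = \lambda M x$, using $M^{-1}B = M^{-1}M^2 = M$. Then I would insert the identity $I = M^{-1}M$ on the left-hand side to write $M^{-1}AM^{-1}(Mx) = \lambda (Mx)$, which is exactly $Hy = \lambda y$ with $y := Mx$. So $(\lambda, y)$ is an eigenpair of $H$.

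The second step is to verify that $H$ is indeed skew-symmetric, which is a one-line check: since $M^\TT = M$ (because $M = B^{1/2}$ is symmetric) and $A^\TT = -A$, one has
\begin{equation*}
  H^\TT = (M^{-1}AM^{-1})^\TT = M^{-\TT}A^\TT M^{-\TT} = M^{-1}(-A)M^{-1} = -H.
\end{equation*}

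There is no real obstacle here; the statement is essentially a congruence transformation converting the pencil $(A,B)$ into a standard eigenproblem for a skew-symmetric matrix, and the author correctly notes the argument is straightforward. The only point worth flagging is that the equivalence preserves eigenvalues exactly (not just up to similarity scaling), and that the eigenvector transformation $x \mapsto Mx$ is the correct one to match the $B$-norm used elsewhere in the paper: if $\|x\|_B = 1$ then $\|y\|_2 = \|Mx\|_2 = (x^\TT M^\TT Mx)^{1/2} = (x^\TT Bx)^{1/2} = 1$, so the normalization convention is consistent between the two formulations. This consistency is what justifies later passing freely between the generalized eigenproblem of $(A,B)$ and the standard skew-symmetric eigenproblem of $H$ in the convergence analysis.
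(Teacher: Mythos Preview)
Your proposal is correct and is exactly the direct verification the paper has in mind; indeed, the paper omits the proof entirely, remarking that it ``is straightforward and therefore omitted.'' Your additional observation that $\|x\|_B=1$ if and only if $\|Mx\|_2=1$ is also correct and is precisely what underlies the paper's later use of $B$-norm geometry for $(A,B)$ versus $2$-norm geometry for $H$.
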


Theorem~\ref{thm1} shows that the computation of the eigenpairs of $(A,B)$ 
is equivalent to that of the skew-symmetric $H$. 
Specifically, Theorem~2.1 of \cite{huang2024skew} shows that the spectral 
decomposition of the skew-symmetric $H=Y\Lambda Y^\HH$ 
has the following form: 
\begin{equation}\label{spectralhatA}
Y=\begin{bmatrix}
\frac{1}{\sqrt2}(W+\mi Z)&\frac{1}{\sqrt2}(W-\mi Z)&Y_0
\end{bmatrix}
\qquad\mbox{and}\qquad
\Lambda=\diag\{\mi\Sigma,-\mi\Sigma,\bm{0}_{\ell_0}\},
\end{equation}
where $W\in\mathbb{R}^{n\times \ell}$, $Z\in\mathbb{R}^{n\times \ell}$ 
and $Y_0\in\mathbb{R}^{n\times\ell_0}$ make the matrix $[W,Z,Y_0]$ orthogonal, 
$\Sigma=\diag\{\sigma_1,  \dots,\sigma_{\ell}\}$ with 
$\sigma_1,\dots,\sigma_{\ell}>0$, 
integer $\ell$ is the total number of nonzero conjugate eigenvalue 
pairs that $H$ owns, and $\ell_0=n-2\ell$ equals to the dimension 
of the null space $\mathcal{N}(H)$ of $H$ or, equivalently, that 
of $\mathcal{N}(A)$.
Therefore, the spectral decomposition of $(A,B)$ is
\begin{equation}\label{spectralAB}
AX=BX\Lambda
\qquad\mbox{with}\qquad
X=\begin{bmatrix} 
	\frac{1}{\sqrt2}(U+\mi V)&\frac{1}{\sqrt2}(U-\mi V)&X_0
\end{bmatrix},
\end{equation}
where $\Lambda$ is defined in \eqref{spectralhatA}, $X=M^{-1}Y$ and 
\begin{equation} \label{relations}
	U=M^{-1}W, \qquad
	V=M^{-1}Z, \qquad
	X_{0}=M^{-1}Y_0
\end{equation}
make the matrix 
$[U,V,X_0]$ $B$-orthogonal, i.e., 
\begin{equation}\label{Borthos}
U^\TT BU=I,\quad
V^\TT BV=I,\quad
U^\TT BV=\bm{0} ,\quad X_0^\TT BX_0=I,\quad
U^\TT BX_0=\bm{0},\quad
V^\TT BX_0=\bm{0}.
\end{equation}

Denote by $u_j,v_j$ and $w_j,z_j$ the $j$th columns of $U,V$ and $W,Z$, respectively. 
Then \eqref{relations} implies that 
\begin{equation}\label{relations2}
	w_j=Mu_j\qquad\mbox{and}\qquad
	z_j=Mv_j.
\end{equation} 
On the basis of the above analyses, throughout this paper, we denote 
the nontrivial eigenpairs of $(A,B)$, i.e., those corresponding to 
nonzero eigenvalues, by 
\begin{equation}\label{eigenpairs}
(\lambda_{\pm j},x_{\pm j})=\left(\pm\mi\sigma_j,
\frac{1}{\sqrt2}(u_j\pm\mi v_j)\right),\qquad j=1,\dots,\ell.	
\end{equation}
Combining the spectral decomposition \eqref{spectralhatA} of $H$ and 
Theorem~2.1 of \cite{huang2024skew}, we know that 
$(\sigma_j,w_j,\pm z_j)$, $j=1,\dots,\ell$ are singular triplets of $H$. 
Therefore, in order to obtain the conjugate eigenpairs 
$(\lambda_{\pm j},x_{\pm j})$ of $(A,B)$, one can first compute the 
singular triplet $(\sigma_j,w_j,z_j)$ or $(\sigma_j,w_j,-z_j)$ of 
$H$ using some skew-symmetric SVD method, e.g., the SSLBD method 
in \cite{huang2024skew}, 
and then recover the approximate eigenpairs of $(A,B)$ 
from the converged approximate singular triplets of $H$ according 
to the above relationships. 
Based on these, we call $(\sigma_j,u_j,v_j)$ a generalized spectral 
triplet of $(A,B)$ with $\sigma$ the generalized spectral value and 
$u_j$, $v_j$ the left and right generalized spectral vectors, respectively.

For ease of presentation, throughout this paper, we assume that 
all the eigenvalues $\lambda_{\pm j}$ of $(A,B)$ in \eqref{eigenpairs} 
are simple, and the generalized spectral values 
$\sigma_1,\dots,\sigma_\ell$ are labeled in decreasing order:
\begin{equation}\label{labeleig}
  \sigma_1>\sigma_2>\dots>\sigma_\ell>0.
\end{equation}
We aim at computing the $k$ pairs of extreme, e.g., largest, conjugate 
eigenvalues $\lambda_{\pm 1},\dots,\lambda_{\pm k}$ of $(A,B)$
and/or the corresponding eigenvectors $x_{\pm 1},\dots,x_{\pm k}$.
By Theorem~\ref{thm1} and above, this is equivalent to 
computing the following partial SVD of the skew-symmetric $H$:
$$
(\Sigma_k,W_k,Z_k)= \left(\mathrm{diag}\{\sigma_1,\dots,\sigma_k\},
[w_1,\dots,w_k],[z_1,\dots,z_k]\right).
$$

\subsection{The GSSLBD process}\label{subsec:1}
Algorithm~\ref{alg1} sketches the $m$-step SSLBD process on the
skew-symmetric $H$ that is presented in \cite{huang2024skew}. 
Given an initial normalized vector $\hat q_1$, it successively
computes the orthonormal and biorthogonal left and right Lanczos 
vectors $\hat p_1,\dots,\hat p_m$ and $\hat q_1,\dots,\hat q_{m+1}$ 
that form orthonormal bases of the (if not break) $m$-dimensional 
Krylov subspaces  
\begin{equation}\label{WWZZmm}
	\WW_m=\mathcal{K}_m(H^2,\hat p_1)  \qquad\mbox{and}\qquad	
	\ZZ_m=\mathcal{K}_m(H^2,\hat q_1) ,
\end{equation}
of $H^2$ with respect to the normalized initial vectors 
$\hat p_1=\frac{H\hat q_1}{\|H\hat q_1\|}$ and $\hat q_1$, respectively.  

\begin{algorithm}[htbp]
\caption{The $m$-step SSLBD process.}
\begin{algorithmic}[1]\label{alg1}
\STATE{Initialization:\ Set $\beta_0=0$ and $\hat p_{0}=\bm{0}$,
and choose $\hat q_1\in\mathbb{R}^{n}$ with $\|\hat q_1\|=1$.}
\FOR{$j=1,\dots,m<\ell$}
\STATE{ Compute $\hat s_j=H\hat q_j-\beta_{j-1}\hat p_{j-1}$ and
	$\alpha_j=\|\hat s_j\|$.}  

\STATE{\textbf{if} $\alpha_j=0$ \textbf{then} break;
\textbf{else} calculate $\hat p_j=\frac{1}{\alpha_j}\hat s_j$.}   

\STATE{Compute $\hat t_j=-H\hat p_j-\alpha_j\hat q_j$ and
	$\beta_{j}=\|\hat t_j\|$.}  

\STATE{\textbf{if} $\beta_j=0$ \textbf{then} break;
\textbf{else} calculate $\hat q_{j+1}=\frac{1}{\beta_{j}}\hat t_j$.}  
\ENDFOR
\end{algorithmic}
\end{algorithm}

Denote
$\widehat P_j=[\hat p_1,\dots,\hat p_j]$ for $j=1,\dots,m$ and
$\widehat Q_j=[\hat q_1,\dots,\hat q_j]$ for $j=1,\dots,m+1$. 
Then they are orthonormal themselves and biorthogonal, 
i.e., $\widehat P_m^\TT \widehat Q_{m+1}=\bm{0}$.
Moreover, the $m$-step SSLBD process can be written in the matrix form:
\begin{equation}\label{LBDmat}
	\left\{\begin{aligned}
		&H\widehat Q_m=\widehat P_mG_m \\[0.5em]
		&H\widehat P_m=-\widehat Q_{m} G_{m}^\TT 
		-\beta_m\hat q_{m+1}e_{m,m}^\TT 
	\end{aligned}\right. 
\qquad\mbox{with}\qquad
G_m=\begin{bmatrix} 
	\alpha_1&\beta_1&&\\&\ddots&\ddots&\\&&\ddots&\beta_{m-1}\\&&&\alpha_{m}
\end{bmatrix},
\end{equation}
where $e_{m,m}$ is the $m$th column of $I_m$.  
 
Note that $H=M^{-1}AM^{-1}$ and $B=M^{2}$. 
Take $P_m=M^{-1}\widehat P_m$, $Q_{m}=M^{-1}\widehat Q_m$ and 
$q_{m+1}=M^{-1}\hat q_{m+1}$. 
Then it is obvious that the columns of $P_m$ and $Q_m$ are 
$B$-orthonormal and they are $B$-biorthogonal, i.e., 
\begin{equation}\label{Bortho}
	P_m^\TT BP_m=I,\qquad
	Q_{m+1}^\TT  BQ_{m+1}=I,	\qquad
	P_m^\TT B Q_{m+1}=\bm{0}.
\end{equation}
In addition, multiplying $M^{-1}$ both hand sides of the two 
relations in \eqref{LBDmat}, we obtain 
\begin{equation}\label{LBDmat2}
	\left\{\begin{aligned}
		&B^{-1}A Q_m= P_mG_m,\\[0.5em]
		&B^{-1} A P_m=-Q_{m} G_{m}^\TT -\beta_m  q_{m+1}e_{m,m}^\TT. 
	\end{aligned}\right. 
\end{equation} 
Modifying Algorithm~\ref{alg1}, we obtain the $m$-step generalized 
SSLBD (GSSLBD) process; see Algorithm~\ref{alg2}.

\begin{algorithm}[htbp]
	\caption{The $m$-step GSSLBD process.}
	\begin{algorithmic}[1]\label{alg2}
		\STATE{Initialization:\ Set $\beta_0=0$ and $  p_{0}=\bm{0}$,
			and choose $ q_1\in\mathbb{R}^{n}$ with $\|  q_1\|_B=1$.}
		\FOR{$j=1,\dots,m<\ell$}
		\STATE{Compute $ s_j= B^{-1}A q_j-\beta_{j-1} p_{j-1}$ and
			$\alpha_j=\| s_j\|_B$.} \label{skwbld1}
		
		\STATE{\textbf{if} $\alpha_j=0$ \textbf{then} break;
			\textbf{else} calculate $ p_j=\frac{1}{\alpha_j} s_j$.}  \label{skwbld2}
		
		\STATE{Compute $ t_j=-B^{-1}A  p_j-\alpha_j q_j$ and
			$\beta_{j}=\|t_j\|_B$.} \label{skwbld3}
		
		\STATE{\textbf{if} $\beta_j=0$ \textbf{then} break;
			\textbf{else} calculate $  q_{j+1}=\frac{1}{\beta_{j}} t_j$.} \label{skwbld4}
		\ENDFOR
	\end{algorithmic}
\end{algorithm}

Assume that Algorithm~\ref{alg2} does not break down for $m<\ell$. 
Then the $m$-step GSSLBD process computes the $B$-orthonormal 
and $B$-biorthogonal bases $\{p_j\}_{j=1}^m$ and 
$\{q_j\}_{j=1}^{m+1}$ of the Krylov subspaces 
\begin{equation}\label{defUV}
	\UU_m=\mathcal{K}_m((B^{-1}A)^2,p_1)
	\qquad\mbox{and}\qquad
	\VV_{m+1}=\mathcal{K}_{m+1}((B^{-1}A)^2,q_1)
\end{equation}
generated by $(B^{-1}A)^2$ and the $B$-norm unit length vectors 
$p_1=\frac{B^{-1}A q_1}{\|B^{-1}A q_1\|_{B}}$ and $q_1$, respectively. 
Take $\UU_m$ and $\VV_m$ as the left and right subspaces, from which 
approximations to the left and right generalized spectral vectors 
$u_j$'s and $v_j$'s of $(A,B)$ are extracted, respectively. 
Then by \eqref{Bortho} {\em any} approximate left and right generalized 
spectral vectors, i.e., the real and imaginary parts of an approximate 
eigenvector, extracted from $\UU_m$ and $\VV_m$ are $B$-biorthogonal,
a desired property as the exact left and right generalized spectral 
vectors of $(A,B)$ are so, too; see \eqref{Borthos}.

Once the GSSLBD process breaks down for some $m\leq \ell$, i.e., 
$\alpha_m=0$ or $\beta_m=0$, all the singular values of 
$G_{m-1}^{\prime}=[G_{m-1},\beta_{m-1}e_{m-1,m-1}]$ or $G_m$ 
are exact ones of $H$, as we shall see later, and $m-1$ or $m$ 
exact singular triplets of $H$ can be found, so that $m-1$ or 
$m$ exact conjugate eigenpairs of $(A,B)$ can be recovered.
Note that the singular values of $G_{m-1}^{\prime}$ and $G_m$ are simple
whenever the $m$-step GSSLBD process does not break down. 
Therefore, the GSSLBD process must break down no later than $\ell$ 
steps since $H$ has $\ell$ distinct nonzero singular values.  
 
\subsection{The GSSLBD method}\label{subsec:2}
With the left and right subspaces $\UU_m$ and $\VV_m$ 
defined by \eqref{defUV}, 
we implicitly take the Krylov subspaces in \eqref{WWZZmm}, 
which by definition are biorthogonal and satisfy 
\begin{equation}\label{WWZZm}
	\WW_m=M\UU_m \qquad\mbox{and}\qquad	
	\ZZ_m=M\VV_m,  
\end{equation}
as the left and right searching subspaces for the left and right singular 
vectors $w$ and $z$ of $H$, respectively.  
We adopt the standard Rayleigh--Ritz projection, i.e.,
the standard extraction approach \cite{saad2011numerical,stewart2001matrix},
to compute the approximate singular triplets 
$(\theta_j,\tilde w_j,\tilde z_j)$ 
of $H$ with the unit-length $\tilde w_j\in\WW_m$ 
and $\tilde z_j\in\ZZ_m$ such that 
\begin{equation}\label{standard}
	\left\{\begin{aligned}
		&H\tilde z_j-\theta_j\tilde w_j\perp\WW_m,\\
		&H\tilde w_j+\theta_j\tilde z_j\perp\ZZ_m,
	\end{aligned}\right.
	\qquad\qquad j=1,\dots,m.
\end{equation}
Denote $\tilde u_j=M^{-1}\tilde w_j$ and $\tilde v_j=M^{-1}\tilde z_j$ for 
$j=1,\dots,m$. Then $\tilde u_j\in\UU_m$ and $\tilde v_j\in\VV_m$ are of 
$B$-norm unit length and we take $(\theta_j,\tilde u_j,\tilde v_j)$ as 
approximations to the generalized spectral triplets of $(A,B)$, $j=1,\dots,m$. 
It is straightforward to verify that \eqref{standard} is equivalent to  
\begin{equation}\label{standard2}
	\left\{\begin{aligned}
		&B^{-1} A\tilde v_j-\theta_j\tilde u_j\perp_B \UU_m,\\
		&B^{-1}  A\tilde  u_j+\theta_j\tilde  v_j\perp_B \VV_m,
	\end{aligned}\right.
	\qquad\qquad j=1,\dots,m,
\end{equation}
where $\perp_B$ denotes the $B$-orthogonality, i.e., 
$P_m^{\TT}B(B^{-1} A\tilde v_j-\theta_j\tilde u_j)=\bm{0}$
and $Q_m^\TT B(B^{-1}  A\tilde  u_j+\theta_j\tilde  v_j)=\bm{0}$ 
as $P_m$ and $Q_m$ are the $B$-orthonormal basis 
matrices of $\UU_m$ and $\VV_m$, respectively. 
  
Set $\tilde u_j=P_mc_j$ and $\tilde v_j=Q_md_j$ for $j=1,\dots,m$. 
Making use of \eqref{LBDmat2}, relation \eqref{standard2} amounts 
to solving the following problem: 
\begin{equation}\label{Ritz}
	G_md_j=\theta_jc_j,\qquad
	G_m^\TT c_j=\theta_jd_j,\qquad
	j=1,\dots,m;
\end{equation}
that is, $(\theta_j,c_j,d_j)$, $j=1,\dots,m$ are the singular
triplets of $G_m$.
Therefore, the standard extraction approach computes the SVD
of $G_m$ with the singular values ordered decreasingly: 
$\theta_1>\theta_2>\dots>\theta_m$, and takes
\begin{equation}\label{appeigenpair}
(\theta_j,\tilde u_j=P_mc_j,\tilde v_j=Q_md_j),\qquad j=1,\dots,m	
\end{equation}
as approximations to some generalized spectral triplets of $(A,B)$.
The resulting method is called the GSSLBD method. 
We call $(\theta_j,\tilde u_j, \tilde v_j)$ the generalized 
Ritz approximation of $(A,B)$ with respect to the left and right 
subspaces $\UU_m$ and $\VV_m$; particularly, we call $\theta_j$ 
the Ritz values, and call $\tilde u_j$ and $\tilde v_j$ the corresponding 
left and right generalized Ritz vectors. 

Since $\widehat Q_mH^\TT H\widehat Q_m=G_m^\TT G_m$ by \eqref{LBDmat}, 
the Cauchy interlace theorem of eigenvalues 
(cf. \cite[Theorem 10.1.1, pp.203]{parlett1998symmetric}) indicates that 
for $j$ fixed, as $m$ increases, the singular values $\theta_j$ and 
$\theta_{m-j+1}$ of $G_m$ monotonically converge to the singular 
values $\sigma_j$ and $\sigma_{\ell-j+1}$ of $H_m$ from below and 
above, respectively.
Therefore, we can take $\{(\theta_j,\tilde u_j,\tilde v_j)\}_{j=1}^k$ 
as the approximations to the $k$ largest generalized spectral triplets
$\{(\sigma_j,u_j,v_j)\}_{j=1}^k$ of $(A,B)$ for $k\ll m$.
Likewise, we might use $\{(\theta_{m-j+1},\tilde u_{m-j+1},
\tilde v_{m-j+1})\}_{j=1}^k$ to approximate the $k$ smallest
generalized spectral triplets 
$\{(\sigma_{\ell-j+1},u_{\ell-j+1},v_{\ell-j+1})\}_{j=1}^k$ 
of $(A,B)$ with $k\ll m$.

\subsection{Reconstruction of approximate eigenpairs and detection of convergence}
In this section, for brevity, we omit the subscript $j$ and denote 
by $\left(\theta,\tilde u,\tilde v\right)$ any generalized Ritz 
approximation of $(A,B)$ computed at the $m$th step of the GSSLBD method.
We reconstruct the approximations to the conjugate eigenpairs 
$(\lambda_{\pm},x_{\pm})$ of $(A,B)$ defined in \eqref{eigenpairs} by 
\begin{equation}\label{appeigenpairs}
	(\tilde\lambda_{\pm},\tilde x_{\pm})
	=\left(\pm\mi\theta,\frac{1}{\sqrt2}(\tilde u\pm\mi\tilde v)\right).
\end{equation} 
Their residuals are
\begin{equation}\label{defres}
	r_{\pm} = A\tilde x_{\pm}-\tilde\lambda_{\pm}B \tilde x_{\pm}
	= r_{\mathrm{R}} \pm \mi r_{\mathrm{I}}
	\qquad\mbox{with}\qquad
	\left\{\begin{aligned}
		r_{\mathrm{R}}&=\tfrac{1}{\sqrt2}(A\tilde u+\theta B\tilde v),\\
		r_{\mathrm{I}}&=\tfrac{1}{\sqrt2}(A\tilde v-\theta B\tilde u).
	\end{aligned}
	\right.
\end{equation}
Obviously, $r_{\pm}=\bm{0}$ if and only if $(\theta,\tilde u,\tilde v)$
is an exact generalized spectral  triplet of $(A,B)$ or, equivalently, 
$(\tilde\lambda_{\pm},\tilde x_{\pm})$ are exact eigenpairs of $(A,B)$.
By the square decomposition \eqref{squaredec} of $B$ and 
the definition \eqref{Achek} of $H$, it is easy to verify that 
\begin{equation}\label{res2}
	r_{\pm}=M(H{\tilde y}_{\pm}-\tilde \lambda_{\pm}{\tilde y}_{\pm})
	=M(\hat r_{\mathrm{R}}\pm\mi \hat r_{\mathrm{I}})
	\quad\mbox{with}\quad
	\left\{\begin{aligned}
		\hat r_{\mathrm{R}}&=\tfrac{1}{\sqrt2}(H\tilde w+\theta\tilde z)
		=M^{-1}r_{\mathrm{R}},\\
		\hat r_{\mathrm{I}}&=\tfrac{1}{\sqrt2}(H\tilde z-\theta\tilde w)
		=M^{-1}r_{\mathrm{I}}.
	\end{aligned}
	\right.
\end{equation}
where ${\tilde y}_{\pm}=\tfrac{1}{\sqrt2}
(\tilde w\pm\mi \tilde z)=M\tilde x_{\pm}$ 
is of $2$-norm unit length by the fact that $\tilde x_{\pm}$ 
is of $B$-norm unit length. 
Notice that $M=B^{\frac{1}{2}}$. 
If the residual norm of $(\theta,\tilde u,\tilde v)$ satisfies 
\begin{equation}\label{res}
	\|r_{\pm}\|=\sqrt{\|r_{\mathrm{R}}\|^2+
		\|r_{\mathrm{I}}\|^2}\leq \sqrt{\|B\|}\|H\|\cdot tol 
\end{equation}
for a user-prescribed tolerance $tol>0$, we claim that 
$(\tilde\lambda_{\pm},\tilde x_{\pm})$ has
converged and stop the iterations. 
We shall consider practical estimates for $\|B\|$ and 
$\|H\|$ in section~\ref{subsec:5}.  

Inserting $\tilde w=\widehat P_mc$ and 
$\tilde z=\widehat Q_md$ into \eqref{res2} and
making use of \eqref{LBDmat} and \eqref{Ritz}, 
it is easy to justify that
\begin{eqnarray}
	\|r_{\pm}\|&=&\frac{1}{\sqrt2}\sqrt{
		\|M(\widehat Q_m(G_m^\TT c-\theta d)
		+\beta_m\hat q_{m+1}e_{m,m}^\TT c)\|^2 
		+\|M\widehat P_m(G_md-\theta c)\|^2} \nonumber \\
	&=&\frac{1}{\sqrt2}\beta_m\cdot|e_{m,m}^\TT c|
	\cdot\|M\hat q_{m+1}\|
	=\frac{1}{\sqrt2}\beta_m\cdot|e_{m,m}^\TT c|
	\cdot\|B  q_{m+1}\|, \label{resieasy}
\end{eqnarray}
where the last equality holds since $\hat q_{m+1}=Mq_{m+1}$.
Therefore, one can calculate the residual norms of the generalized 
Ritz approximations of $(A,B)$ efficiently without explicitly 
forming the approximate generalized spectral vectors. 
We only compute them until the corresponding residual norms 
defined by \eqref{resieasy} drop below the right hand side of \eqref{res}. 
In addition, relation \eqref{resieasy} shows that if $\beta_m=0$, 
all the $m$ computed approximate generalized spectral triplets
$(\theta,\tilde u,\tilde v)$ are the exact ones of $(A,B)$. 
Analogously, we can draw the same conclusion for $\alpha_m=0$. 
In other words, once the $m$-step GSSLBD process breaks down, 
we can compute $m-1$ or $m$ exact conjugate eigenpairs of $(A,B)$, 
as we have mentioned at the end of Section~\ref{subsec:1}.

\section{A convergence analysis}\label{sec3}
For later use, denote $X_j=[u_j,v_j]$ and $Y_j=[w_j,z_j]$, 
$j=1,\dots,\ell$. 
Then $\XX_j=\spans\{X_j\}$ and $\YY_j=\spans\{Y_j\}$ 
are the eigenspaces of $(A,B)$ and $H$ defined by \eqref{Achek}
associated with the conjugate eigenvalues 
$\lambda_{\pm j}=\pm\mi\sigma_j$, 
respectively.  
In addition, we have $Y_j=MX_j$ and $\YY_j=M\XX_j$; 
see \eqref{spectralhatA}--\eqref{relations}. 

In order to make a convergence analysis on the GSSLBD method,
we review some preliminaries. 
For two subspaces $\WW$ and $\ZZ$ of $\mathbb{R}^{n}$, 
define the $B$-canonical angles between them by
\begin{equation}\label{Bcanonical}
	\angle_B(\mathcal{W},\mathcal{Z})
	=\angle(M\mathcal{W},M\mathcal{Z}),	
\end{equation}
where $M=B^{\frac{1}{2}}$ is as in \eqref{squaredec} and 
$\angle(\cdot,\cdot)$ denotes the general 
canonical angles between two subspaces with respect to $2$-norm 
\cite[pp.329--330]{golub2012matrix}. 
Let $\widetilde W$ and $\widetilde Z$ be the $B$-orthonormal basis 
matrices of $\WW$ and $\ZZ$, respectively, and $\widetilde W_{\perp_B}$ 
be such that $[\widetilde W,\widetilde W_{\perp_B}]$ is $B$-orthogonal.
Then by \eqref{squaredec} the columns of $M\widetilde W$, 
$M\widetilde W_{\perp_B}$ and $M\widetilde Z$
form orthonormal basis matrices of the subspaces $M\WW$, 
$\{M\WW\}^{\perp}$ and $M\ZZ$, respectively, where $\{\cdot\}^{\perp}$ 
denotes the orthogonality complement of a subspace 
with respect to $\mathbb{R}^n$. 
By the definition of the $2$-norm distance between two 
subspaces \cite{jia2001}, we defined the $2$-norm $B$-distance 
between $\mathcal{W}$ and $\mathcal{Z}$ by  
$$
\|\sin\angle_B(\mathcal{W},\mathcal{Z})\|
=\|(I-M\widetilde W\widetilde W^\TT M)(M\widetilde Z)\|
=\|\widetilde W_{\perp_B}^\TT  M^2\widetilde Z\|
=\|\widetilde W_{\perp_B}^\TT  B\widetilde Z\|.
$$
In this paper, we will use the Frobenius norm  
\begin{equation}\label{sinf}
	\|\sin\angle_B(\mathcal{W},\mathcal{Z})\|_{\FF}
	=\|\widetilde W_{\perp_B}^\TT  B\widetilde Z\|_{\FF},
\end{equation}
which equals to the square root of the squares sum of sines 
of all the $B$-canonical angles between $\WW$ and $\ZZ$, 
to measure the $B$-distance between those two subspaces.
Correspondingly, we denote by
$\|\tan\angle_B(\mathcal{W},\mathcal{Z})\|_{\FF}$ 
the square root of the squares sum of the tangents
of all the $B$-canonical angles between $\WW$ and $\ZZ$. 
These tangents are the generalized singular values of
the matrix pair $(\widetilde W_{\perp_B}^\TT B\widetilde Z, 
\widetilde W^\TT B\widetilde Z)$.
 
\begin{lemmamy}\label{lemma1}
	Let $M$ be as in \eqref{squaredec}. 
	For any two subspaces $\WW$ and $\ZZ$ of $\mathbb{R}^n$, 
	it holds that
\begin{equation}\label{sinWZ0}
	\frac{1}{\sqrt{\kappa(B)}}\|\sin\angle_B(\mathcal{W},\mathcal{Z})\|_{\FF}
	\leq\|\sin\angle(\mathcal{W},\mathcal{Z})\|_{\FF}
	\leq\sqrt{\kappa(B)}\|\sin\angle_B(\mathcal{W},\mathcal{Z})\|_{\FF},
\end{equation}
where $\kappa(B)=\|B\|\|B^{-1}\|$ is the condition number of $B$.
\end{lemmamy}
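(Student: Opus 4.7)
The strategy is to reduce the statement to an ordinary $2$-norm comparison by exploiting $M=B^{1/2}$. From definition~\eqref{Bcanonical} together with the observation that $M\widetilde W$ and $M\widetilde Z$ are genuine $2$-orthonormal bases of $M\WW$ and $M\ZZ$, one already has $\|\sin\angle_B(\WW,\ZZ)\|_{\FF} = \|\sin\angle(M\WW,M\ZZ)\|_{\FF}$. Thus the lemma reduces to the distortion estimate
\[
\frac{1}{\sqrt{\kappa(B)}}\,\|\sin\angle(M\WW,M\ZZ)\|_{\FF} \leq \|\sin\angle(\WW,\ZZ)\|_{\FF} \leq \sqrt{\kappa(B)}\,\|\sin\angle(M\WW,M\ZZ)\|_{\FF},
\]
which quantifies how the Frobenius sum of squared sines of principal angles changes under the invertible linear map $M$, whose condition number equals $\|M\|\,\|M^{-1}\|=\sqrt{\kappa(B)}$.

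I would establish these two inequalities by combining two elementary ingredients. The first is the Frobenius identity $\|\sin\angle(\mathcal A,\mathcal B)\|_{\FF}^2 = \sum_i \mathrm{dist}_2(b_i,\mathcal A)^2$, valid for any $2$-orthonormal basis $\{b_i\}$ of $\mathcal B$. The second is the pointwise bound
\[
\sigma_{\min}(M)\,\mathrm{dist}_2(x,\WW)\;\leq\;\mathrm{dist}_2(Mx,M\WW)\;\leq\;\|M\|\,\mathrm{dist}_2(x,\WW),\qquad x\in\mathbb{R}^n,
\]
whose upper bound uses the same optimal approximant $w^\star\in\WW$ as in the unweighted distance, and whose lower bound follows from $\|x-w\|_2\le\|M^{-1}\|\,\|M(x-w)\|_2$ for arbitrary $w\in\WW$.

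The subtlety is that the natural $2$-orthonormal basis of $M\ZZ$ delivered by $M$ is $M\widetilde Z$, built from the \emph{$B$-orthonormal} basis $\widetilde Z$ of $\ZZ$, not from a $2$-orthonormal basis $Z$ of $\ZZ$. To bridge this I would write $\widetilde Z = Z T_Z$ for an invertible square $T_Z$; the normalization $\widetilde Z^\TT B\widetilde Z = I$ forces $T_Z T_Z^\TT = (Z^\TT B Z)^{-1}$, and the Rayleigh quotient characterization applied to $Z^\TT Z = I$ confines the spectrum of $Z^\TT B Z$ to $[\sigma_{\min}(B),\sigma_{\max}(B)]$, yielding $\|T_Z\|^2\le\|B^{-1}\|$ and $\|T_Z^{-1}\|^2\le\|B\|$. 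Chaining everything then gives
\[
\|\sin\angle_B(\WW,\ZZ)\|_{\FF} \leq \|M\|\,\|T_Z\|\,\|\sin\angle(\WW,\ZZ)\|_{\FF} \leq \sqrt{\|B\|\,\|B^{-1}\|}\,\|\sin\angle(\WW,\ZZ)\|_{\FF},
\]
which is the right inequality in \eqref{sinWZ0}; the left inequality follows by the symmetric computation, using $MZ = M\widetilde Z\, T_Z^{-1}$, the lower pointwise bound $\|Mx\|_2\geq\|M^{-1}\|^{-1}\|x\|_2$, and again $\|T_Z^{-1}\|^2\le\|B\|$.

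The main obstacle is purely bookkeeping: one must simultaneously track two different orthonormal bases of $\ZZ$ (one in the $2$-inner product, one in the $B$-inner product) and verify that the four factors $\|M\|$, $\|M^{-1}\|$, $\|T_Z\|$, $\|T_Z^{-1}\|$ combine into exactly $\sqrt{\kappa(B)}$ on each side rather than a larger power. Apart from this accounting, every step in the argument is routine.
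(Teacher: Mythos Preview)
Your argument is correct and complete, with one harmless slip: the displayed chain $\|\sin\angle_B\|_\FF \le \sqrt{\kappa(B)}\,\|\sin\angle\|_\FF$ is equivalent to the \emph{left} inequality in \eqref{sinWZ0}, not the right one; the symmetric computation you sketch then yields the right inequality.

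Your route is genuinely different from the paper's. The paper works directly with block-matrix expressions: it decomposes the $B$-orthonormal basis $\widetilde Z$ along $\widetilde W$ and its $B$-orthogonal complement $\widetilde W_{\perp_B}$, identifies $\|\sin\angle_B\|_\FF$ with the Frobenius norm of the off-diagonal block $E_2$, and then rewrites the ordinary $\|\sin\angle\|_\FF$ in terms of $E_2$, the $2$-orthogonal complement $\widetilde W_\perp$, and the normalizing factor $(\widetilde Z^\TT\widetilde Z)^{-1/2}$; the factors $\sqrt{\|B\|}$ and $\sqrt{\|B^{-1}\|}$ appear from bounding $\|M^{-1}\|$ and $\|((M\widetilde Z)^\TT B^{-1}(M\widetilde Z))^{-1/2}\|$. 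You replace these complement-based manipulations by two cleaner ingredients: the pointwise distortion of $\mathrm{dist}_2(\,\cdot\,,\WW)$ under $M$, and the change-of-basis $T_Z$ between the $2$- and $B$-orthonormal bases of $\ZZ$ with $\|T_Z\|^2\le\|B^{-1}\|$ and $\|T_Z^{-1}\|^2\le\|B\|$. Both arguments produce the same constant $\sqrt{\kappa(B)}$; yours is more geometric and avoids introducing two distinct orthogonal complements of $\WW$, while the paper's stays closer to the matrix definition \eqref{sinf} and makes the provenance of each factor explicit.
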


\begin{proof}
With the denotations described above, we have the following 
decomposition for $\widetilde Z$:
\begin{equation}\label{Edecomp}
\widetilde Z=\widetilde WE_1+\widetilde W_{\perp_B}E_2.
\end{equation} 
Premultiplying $\widetilde W_{\perp_B}^\TT B$ both sides of 
\eqref{Edecomp} delivers $E_2=\widetilde W_{\perp_B}^\TT B\widetilde Z$. 
Therefore, \eqref{sinf} amounts to
\begin{equation}\label{sinBWZ}
\|\sin\angle_B(\mathcal{W},\mathcal{Z})\|_{\FF}=\|E_2\|_{\FF}.
\end{equation}
Note that $\widetilde W$ and $\widetilde Z$ have full column ranks. 
The columns of $\widetilde W(\widetilde W^\TT \widetilde W)^{-1/2}$ 
and $\widetilde Z(\widetilde Z^\TT \widetilde Z)^{-1/2}$ 
form orthonormal bases of $\mathcal{W}$ and $\mathcal{Z}$, respectively. 
Assume that $\widetilde W_{\perp}$ is the orthonormal basis 
matrix of $\WW^{\perp}$, i.e.,
$[\widetilde W(\widetilde W^\TT\widetilde W)^{-1/2},\widetilde W_{\perp}]$
is orthogonal.
Then by definition, we have 
\begin{eqnarray}
	\|\sin\angle(\mathcal{W},\mathcal{Z})\|_{\FF}&=&
	\|\widetilde W_{\perp}^\TT\widetilde Z
	(\widetilde Z^\TT\widetilde Z)^{-1/2}\|_{\FF}
	=\|\widetilde W_{\perp}^\TT\widetilde W_{\perp_B}E_2 
	(\widetilde Z^\TT\widetilde Z)^{-1/2}\|_\FF \nonumber \\
	&=&\|(\widetilde W_{\perp}^\TT M^{-1}\widetilde W_{\perp_B}^{\prime})E_2 
	((\widetilde Z^{\prime})^\TT B^{-1}\widetilde Z^{\prime})^{-1/2}\|_\FF \nonumber \\ 
	&\leq&\|M^{-1}\|\cdot\|E_2\|_\FF\cdot\|((\widetilde Z^{\prime})^\TT 
	B^{-1}\widetilde Z^{\prime})^{-1/2} \| \nonumber \\
	&\leq&\|B^{-1}\|^{\frac{1}{2}}\cdot\|E_2\|_\FF\cdot\|B\|^{\frac{1}{2}}, \label{sinWZ}  
\end{eqnarray}
where the second equality follows by inserting \eqref{Edecomp} into the 
first one and realizing $\widetilde W_{\perp}^\TT\widetilde W=\bm{0}$,
the third equality and the first inequality result from denoting the orthonormal 
$\widetilde W_{\perp_B}^{\prime} = M\widetilde W_{\perp_B}$ 
and $\widetilde Z^{\prime}=M\widetilde Z$, and the last inequality holds 
by the Cauchy interlace theorem of eigenvalues and \eqref{squaredec}. 
As a consequence, we obtain the second inequality of \eqref{sinWZ0} 
by inserting \eqref{sinBWZ} 
and $\kappa(B)=\|B\|\|B^{-1}\|$ into the right hand side of \eqref{sinWZ}. 

Denote by $E_3$ the matrix in the right 
hand side of the third equality of \eqref{sinWZ}, where notice that 
$\widetilde W_{\perp_B}^{\prime}=M\widetilde W_{\perp_B}$, we have  
\begin{equation}\label{sinWZ2}
	\|\sin\angle(\mathcal{W},\mathcal{Z})\|_{\FF}=\|E_3\|_\FF
	\quad\mbox{with}\quad
	E_3=(\widetilde W_{\perp}^\TT \widetilde W_{\perp_B})E_2 
	((\widetilde Z^{\prime})^\TT B^{-1}\widetilde Z^{\prime})^{-1/2}.
\end{equation}
Since $[\widetilde W,\widetilde W_{\perp}]$ is nonsingular, 
we can decompose $\widetilde W_{\perp_B}=\widetilde WE_4+\widetilde W_{\perp}E_5$ 
for some $E_4$ and $E_5$.   
Premultiplying $\widetilde W_{\perp}^\TT$ and $\widetilde W_{\perp_B}^\TT B$ 
this equation, respectively, gives rise to  
$\widetilde W_{\perp}^\TT\widetilde W_{\perp_B}=
E_5=(\widetilde W_{\perp_B}^\TT B\widetilde W_{\perp})^{-1}
=((\widetilde W_{\perp_B}^{\prime})^\TT M\widetilde W_{\perp})^{-1}$. 
Inserting this relation into \eqref{sinWZ2}, rearranging the 
resulting equation and taking Frobenius norms, we obtain   
\begin{eqnarray}
	\|E_2\|_\FF 
	&=&\|( (\widetilde W_{\perp_B}^{\prime})^\TT M\widetilde W_{\perp})
	E_5((\widetilde Z^{\prime})^\TT B^{-1}\widetilde Z^{\prime})^{1/2}\|_\FF \nonumber \\
	&\leq &\|M\|\cdot\|	E_5\|_\FF \cdot 
	\|((\widetilde Z^{\prime})^\TT B^{-1}\widetilde Z^{\prime})^{1/2}\| \nonumber  \\
	&\leq &\|B\|^{\frac{1}{2}}\cdot\|E_5\|_\FF \cdot\|B^{-1}\|^{\frac{1}{2}}, \label{E2E3}
\end{eqnarray}
where we have used the fact that the columns of $\widetilde W_{\perp}$, 
$\widetilde W_{\perp_B}^{\prime}$ and $\widetilde Z^{\prime}$ are all
orthonormal. 
Then the first inequality of \eqref{sinWZ0} follows by 
inserting \eqref{sinBWZ}, \eqref{sinWZ2} and $\kappa(B)=\|B\|\|B^{-1}\|$ 
into \eqref{E2E3}.
\end{proof}

\begin{remark}
	Following the same derivations, we can easily prove 
	the result \eqref{sinWZ0} for $2$-norm. 
Lemma~\ref{lemma1} indicates the equivalent of the 
general and $B$-distances   
between $\WW$ and $\ZZ$. 
With a well or even mildly ill conditioned $B$,   
$\|\sin\angle(\mathcal{W},\mathcal{Z})\|_{\FF}
\sim\|\sin\angle_B(\mathcal{W},\mathcal{Z})\|_{\FF}$. 
Therefore, it has the same effect by measuring either 
distance between two subspaces.	
\end{remark}

Note that the real and imaginary parts of the eigenvectors $x_{\pm j}$ 
of $(A,B)$ can be switched because $\mi x_{\pm j}$ are also its 
eigenvectors and the left and right generalized spectral vectors 
$u_j$ and $v_j$  are the right and left ones, too; 
see \eqref{spectralAB}.
As we have shown, any pair of the approximate left and right 
generalized spectral vectors $\tilde u_{j^{\prime}}$ and 
$\tilde v_{j^{\prime}}$ extracted from the $B$-biorthogonal 
subspaces $\UU_m$ and $\VV_m$ are mutually $B$-orthogonal, 
using which we construct the real and imaginary parts of 
the approximation to an eigenvector, say 
$x_{\pm j}=u_j\pm v_j$, of $(A,B)$. 
This means, the $2$-dimensional subspace 
$\spans\{\tilde u_{j^{\prime}},\tilde v_{j^{\prime}}\}$
form a reasonable approximation to the eigenspace 
$\XX_j=\spans\{x_j,x_{-j}\}$ corresponding to 
the conjugate eigenvalues $\lambda_{\pm j}$ of $(A,B)$. 
Therefore, in the eigenvalue computation context of a
skew-symmetric/symmetric positive definite matrix pair, 
because of these properties, when analyzing the convergence behavior 
of the GSSLBD method, we consider the $B$-norm direct sum 
$\UU_m\oplus_B\VV_m$ as a whole, and estimate the distance 
between a desired two dimensional eigenspace $\XX_j$ of $(A,B)$ 
and the $2m$-dimensional $\UU_m\oplus_B\VV_m$ for $j$ small. 
Exploiting these distances and their estimates, we can 
establish a priori error bounds for the approximate eigenvalues 
and eigenspaces computed by the GSSLBD method, showing how 
fast they converge to zero when computing several extreme 
eigenpairs.

In terms of the definition of
$\|\tan\angle_B(\cdot,\cdot)\|_{\FF}$,
we present the estimate for
$\|\tan\angle_B(\UU_m\oplus_B\VV_m,\mathcal{X}_j)\|_{\FF}$, 
which is a generalization of Theorem~4.1 in \cite{huang2024skew}.

\begin{theoremmy}\label{thm2}
	Let $\UU_m$ and $\VV_m$ be defined by \eqref{defUV},
	and suppose that $X_j^\TT  BF$ with 
	$F=[p_1,q_1]$ is nonsingular.
	Then the following estimate holds for any integer $1\leq j\leq m$:
	\begin{equation}\label{accBUV}
		\|\tan\angle_B(\UU_m\oplus_B\VV_m,\mathcal{X}_j)\|_{\FF}\leq
		\frac{\eta_j}{\chi_{m-j}(\xi_j)}\|\tan\angle_B(\mathcal{F},
		\mathcal{X}_j)\|_{\FF},
	\end{equation}
	where $\mathcal{F}=\spans\{p_1,q_1\}$, $\chi_{i}(\cdot)$ 
	is the degree $i$ Chebyshev
	polynomial of the first kind, and
	\begin{equation}\label{gammarho}
		\xi_j=1+2\cdot\frac{\sigma_j^2-\sigma_{j+1}^2}
		{\sigma^2_{j+1}-\sigma^2_{\ell}}
		\quad\mbox{and}\quad
		\eta_j=\left\{\begin{aligned}
			&1,\qquad \quad\qquad\qquad\mbox{if} \quad  j=1,\\
			&\prod\limits_{i=1}^{j-1}\frac{\sigma_i^2-\sigma_\ell^2}
			{\sigma_i^2-\sigma_j^2}, \hspace{2.4em}\mbox{if}\quad j>1.
		\end{aligned}\right.
	\end{equation}
Moreover, provided that $X_{\ell-j+1}^\TT BF$ is nonsingular, 
the following bounds hold for  $1\leq j\leq m$:  
\begin{equation}\label{accBUVs}
	\vspace{-0.5em}
	\|\tan\angle_B(\UU_m\oplus_B\VV_m,\mathcal{X}_{\ell-j+1})\|_{\FF}\leq
	\frac{{\eta}_j^{\prime}}{\chi_{m-j}({\xi}_j^{\prime})}
	\|\tan\angle_B(\mathcal{F},
	\mathcal{X}_{\ell-j+1})\|_{\FF},
\end{equation}
where\vspace{-0.5em}
\begin{equation}\label{gammarho2}
	{\xi}_j^{\prime}=1+2\cdot\frac{\sigma_{\ell-j+1}^2-
		\sigma_{\ell-j}^2}{\sigma^2_{\ell-j}-\sigma^2_1}
	\quad\mbox{and}\quad
	{\eta}_j^{\prime}=\left\{\begin{aligned}
		&1,\hspace{9.75em} \mbox{if} \quad  j=1,\\
		&\prod\limits_{i=1}^{j-1}\frac{\sigma_{\ell-i+1}^2-\sigma_1^2}
		{\sigma_{\ell-i+1}^2-\sigma_{\ell-j+1}^2},
		\hspace{1.85em}\mbox{if}\quad j>1.
	\end{aligned}\right.	
\end{equation}
\end{theoremmy}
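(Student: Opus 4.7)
My plan is to reduce the estimate to its analogue for the skew-symmetric matrix $H = M^{-1}AM^{-1}$ and then invoke Theorem~4.1 of \cite{huang2024skew}. The key point is that similarity by $M = B^{1/2}$ converts every object in the $B$-inner-product world of $(A,B)$ into the standard-inner-product world of $H$: by the definition \eqref{Bcanonical} of $B$-canonical angles, for any subspaces $\mathcal{W}, \mathcal{Z} \subset \mathbb{R}^n$ one has
\begin{equation*}
\|\tan\angle_B(\mathcal{W},\mathcal{Z})\|_{\FF} = \|\tan\angle(M\mathcal{W},M\mathcal{Z})\|_{\FF},
\end{equation*}
since $B$-orthonormal bases of the two subspaces map under $M$ to orthonormal bases of their images, so that the canonical angles are preserved one-for-one.

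First I would verify that $M$ carries the Krylov subspaces in \eqref{defUV} to those in \eqref{WWZZmm}. Since $M(B^{-1}A)^2 M^{-1} = M\cdot M^{-2}AM^{-2}A\cdot M^{-1} = (M^{-1}AM^{-1})^2 = H^2$, we obtain $M\UU_m = \mathcal{K}_m(H^2, Mp_1) = \WW_m$ with $\hat p_1 = Mp_1$, and likewise $M\VV_m = \ZZ_m$ with $\hat q_1 = Mq_1$. The $B$-biorthogonality $\UU_m \perp_B \VV_m$ becomes ordinary orthogonality of $\WW_m$ and $\ZZ_m$ under $M$, so $M(\UU_m \oplus_B \VV_m) = \WW_m \oplus \ZZ_m$ is a genuine orthogonal direct sum. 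By \eqref{spectralhatA}--\eqref{relations}, $M\XX_j = \YY_j$ and $M\mathcal{F} = \spans\{\hat p_1, \hat q_1\}$; the nonsingularity hypothesis becomes $X_j^\TT BF = (MX_j)^\TT (MF) = Y_j^\TT [\hat p_1,\hat q_1]$, which is exactly the analogue required by Theorem~4.1 of \cite{huang2024skew}.

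Combining the above, \eqref{accBUV} reduces to the inequality
\begin{equation*}
\|\tan\angle(\WW_m \oplus \ZZ_m, \YY_j)\|_{\FF} \leq \frac{\eta_j}{\chi_{m-j}(\xi_j)}\, \|\tan\angle(\spans\{\hat p_1,\hat q_1\}, \YY_j)\|_{\FF},
\end{equation*}
which is precisely Theorem~4.1 of \cite{huang2024skew} applied to the skew-symmetric $H$, whose nonzero singular values coincide with $\sigma_1 > \cdots > \sigma_\ell$ from \eqref{labeleig}. The bound \eqref{accBUVs} for the smallest eigenpairs follows along the same lines from the smallest-singular-value variant of that theorem, in which $\sigma_{\ell-j+1}$ plays the role of the targeted value and the Chebyshev factor is evaluated at $\xi_j^{\prime}$. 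The main obstacle is bookkeeping rather than analytic: one must check that the single map $M$ simultaneously intertwines $(B^{-1}A)^2$ with $H^2$, turns $B$-orthogonal direct sum into orthogonal direct sum, and preserves canonical-angle measurements through \eqref{Bcanonical}. Once these three facts are in hand, both \eqref{accBUV} and \eqref{accBUVs} are immediate consequences of the corresponding skew-symmetric bounds.
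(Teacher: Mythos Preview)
Your proposal is correct and follows essentially the same route as the paper: both reduce the $B$-angle estimate to the standard-angle estimate for the skew-symmetric $H=M^{-1}AM^{-1}$ via the isometry $M=B^{1/2}$, verify that the nonsingularity hypothesis $X_j^{\TT}BF$ becomes $Y_j^{\TT}\widehat F$, and then invoke Theorem~4.1 of \cite{huang2024skew}. Your explicit verification of the intertwining $M(B^{-1}A)^2M^{-1}=H^2$ is a detail the paper absorbs into its earlier discussion of the equivalence between the GSSLBD and SSLBD processes, but the argument is otherwise identical.
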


\begin{proof} 
Recall from \eqref{WWZZm} that $\WW_m$ and $\ZZ_m$ are the 
biorthogonal left and right Lanczos subspaces generated by 
the $m$-step SSLBD process applied to $H$ defined by \eqref{Achek} 
with the initial unit-length vector $\hat q_1=Mq_1$. 
For the concerned eigenspace $\YY_j=M\XX_j$ of $H$ 
corresponding to the conjugate eigenvalues $\lambda_{\pm j}$ 
with the orthonormal basis matrix $Y_j=MX_j$, 	
the matrix $Y_j^\TT\widehat F=X_j^\TT BF$ with 
$\widehat F=[\hat p_1,\hat q_1]=MF$ is nonsingular by assumption.  
Then Theorem~4.1 of \cite{huang2024skew} indicates 
that for any integer $1\leq j\leq m$,
\begin{equation}\label{tanWZ} 
	\|\tan\angle(\WW_m\oplus\ZZ_m,{\mathcal{Y}}_j)\|_{\FF}\leq
	\frac{\eta_j}{\chi_{m-j}(\xi_j)}
	\|\tan\angle( \widehat {\mathcal{F}}, {\mathcal{Y}_j})\|_{\FF},
\end{equation} 
where $\widehat {\mathcal{F}}=\spans\{\widehat F\}=M\mathcal{F}$, 
$\xi_j$ and $\eta_j$ are defined by \eqref{gammarho} and 
$\chi_{i}(\cdot)$ is the degree $i$ Chebyshev
polynomial of the first kind. 
Therefore, we obtain \eqref{accBUV} by realizing from the definition 
that $\angle(\WW_m\oplus\ZZ_m,\mathcal{Y}_j)
=\angle_B(\UU_m\oplus_B\VV_m,\mathcal{X}_j)$ 
and
$\angle(\widehat{\mathcal{F}},\mathcal{Y}_j)
=\angle_B(\mathcal{F},\mathcal{X}_j)$. 

Following the same derivations as above, we can prove \eqref{accBUVs}. 
\end{proof}

\begin{remark}\label{remark3}
	Following the same derivations, we can 
	straightforwardly prove \eqref{accBUV} 
	and \eqref{accBUVs} with the Frobenius norms replaced by $2$-norms.
	 Theorem~\ref{thm2} establishes accuracy estimates for
	 $\mathcal{X}_j$ and $\mathcal{X}_{\ell-j+1}$ approaching
	 the subspace $\UU_m\oplus_B\VV_m$ as the GSSLBD 
	 process proceeds. 
	 The constants $\eta_j\geq 1$, $\xi_j>1$ and $\eta_j^{\prime}\geq1$, 
	 $\xi_j^{\prime}>1$ defined by \eqref{gammarho} and \eqref{gammarho2} 
	 depend only on the spectral distribution of $(A,B)$.  
	 (\romannumeral1) For a fixed $j\ll m$, as long as the initial 
	 subspace $\mathcal{F}$ contains some information on 
	 $\XX_j$ or $\XX_{\ell-j+1}$, i.e., $\XX_j^\TT BF$ or 
	 $\XX_{\ell-j+1}^\TT BF$ is nonsingular, 
	 then $\|\tan\angle_B(\mathcal{F},\XX_j)\|_{\FF}$ or
	 $\|\tan\angle_B(\mathcal{F},\XX_{\ell-j+1})\|_{\FF}$ is finite 
	 in \eqref{accBUV} or \eqref{accBUVs}.
	 (\romannumeral2) The larger $m$ is, the smaller the parameters 
	 $\eta_j/\chi_{m-j}(\xi_j)$ and $\eta_j^{\prime}/\chi_{m-j}(\xi_j^{\prime})$ 
	 are, and therefore the closer $\XX_j$ and $\XX_{\ell-j+1}$ are 
	 to $\UU_m\oplus_B\VV_m$ regarding $B$-distance. 
	 (\romannumeral3) The better a concerned $\sigma_j$ or 
	 $\sigma_{\ell-j+1}$ is separated from the other 
	 generalized spectra values of $(A,B)$, 
	 the smaller $\eta_j$ or $\eta_{\ell-j+1}$ and the larger $\xi_j$ 
	 or $\xi_{\ell-j+1}$ are, meaning that the faster $\XX_j$ or 
	 $\XX_{\ell-j+1}$ approaches $\UU_m\oplus_B\VV_m$ as $m$ increases. 
	 Generally, $\|\tan\angle_B(\UU_m\oplus_B\VV_m,\XX_j)\|_{\FF}$ and  
	 $\|\tan\angle_B(\UU_m\oplus_B\VV_m,\XX_{m-j+1})\|_{\FF}$
	 decays faster for $j$ smaller. 
	 Therefore, $\UU_m\oplus_B\VV_m$ favors $\XX_j$ and 
	 $\XX_{\ell-j+1}$ for $j$ small.  
\end{remark}

\begin{remark}
	If $\kappa(X_j^\TT BF) $ or $\kappa(X_{\ell-j+1}^\TT BF)=1$ 
	for some $1\leq j\leq m$, then 
	$\|\tan\angle_B(\mathcal{F},\XX_j)\|_{\FF}$ or 
	$\|\tan\angle_B(\mathcal{F},\XX_{\ell-j+1})\|_{\FF}=0$, 
	i.e., $\mathcal{F}=\XX_j$ or $\XX_{\ell-j+1}$. 
	In this case, Algorithm~\ref{alg2} breaks down at the first step 
	and we find the exact $u_j$, $v_j$ or $u_{\ell-j+1}$, $v_{\ell-j+1}$. 
	On the other hand,  if $X_j^\TT BF$ or $X_{\ell-j+1}^\TT BF$ is singular for 
	some $1\leq j\leq m$, then 
	$\|\tan\angle_B(\mathcal{F},\XX_j)\|_{\FF}$ or 
	$\|\tan\angle_B(\mathcal{F},\XX_{\ell-j+1})\|_{\FF}\\=+\infty$, that is,  
	the initial $\mathcal{F}$ is deficient in $\XX_j$ or $\XX_{\ell-j+1}$. 
	In this case, $\UU_m\oplus_B\VV_m$ contains no information on 
	$\XX_j$ or $\XX_{\ell-j+1}$.
	As a consequence, one cannot find any reasonable approximation 
	to $u_j$, $v_j$ or $u_{\ell-j+1}$, $v_{\ell-j+1}$ from $\UU_m$ and
	$\VV_m$ for any $m<\ell$.
\end{remark}
  
Next, we present a priori accuracy estimates for the generalized 
Ritz approximations of $(A,B)$ computed by the GSSLBD method 
in term of $\|\sin\angle_B(\UU_m\oplus_B\VV_m,\mathcal{X}_j)\|_{\FF}$.  

\begin{theoremmy}\label{thm3}
For $1\leq j\leq \ell$, assume that  
$(\theta_{j^{\prime}},\tilde u_{j^{\prime}},\tilde v_{j^{\prime}})$
is a generalized Ritz approximation to the generalized 
spectral triplet $(\sigma_j,u_j,v_j)$ of $(A,B)$ with 
$\theta_{j^{\prime}}$ closest to $\sigma_j$
among all the Ritz values.
Denote 
$\widetilde{\XX}_{j^{\prime}}=\spans\{\tilde u_{j^{\prime}},
\tilde v_{j^{\prime}}\}$. Then
	\begin{equation}\label{accbuv}
	\|\sin\angle_B(\widetilde{\XX}_{j^{\prime}},\XX_j)\|_{\FF}\leq
	\sqrt{1+\frac{\|\PP_mH(I-\PP_m)\|^2}
		{\min_{i\neq j^{\prime}}|\sigma_j-\theta_i|^2}}
	\|\sin\angle_B(\UU_m\oplus_B\VV_m,\mathcal{X}_j)\|_{\FF},	
	\end{equation}
where $\PP_m$ is the orthogonal projector onto $\WW_m\oplus\ZZ_m$ 
with $\WW_m$ and $\ZZ_m$ defined by \eqref{WWZZm}.
\end{theoremmy}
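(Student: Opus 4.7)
The plan is to reduce the whole estimate to the Euclidean setting for the skew-symmetric matrix $H=M^{-1}AM^{-1}$. Using definition \eqref{Bcanonical} together with the isometry identifications $Y_j=MX_j$, $\WW_m=M\UU_m$, $\ZZ_m=M\VV_m$, $\tilde w_{j^{\prime}}=M\tilde u_{j^{\prime}}$, $\tilde z_{j^{\prime}}=M\tilde v_{j^{\prime}}$, and $\widetilde{\YY}_{j^{\prime}}:=\spans\{\tilde w_{j^{\prime}},\tilde z_{j^{\prime}}\}=M\widetilde{\XX}_{j^{\prime}}$, the two $B$-sines in \eqref{accbuv} coincide with their standard Euclidean counterparts between $\widetilde{\YY}_{j^{\prime}}$, $\YY_j$ and $\WW_m\oplus\ZZ_m$. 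It therefore suffices to prove the same inequality in the $2$-norm setting on $H$, with $\PP_m$ the ordinary orthogonal projector onto $\WW_m\oplus\ZZ_m$.

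Let $\widetilde{\PP}_{j^{\prime}}$ denote the orthogonal projector onto $\widetilde{\YY}_{j^{\prime}}$. Since $\widetilde{\YY}_{j^{\prime}}\subseteq\WW_m\oplus\ZZ_m$, we have $\widetilde{\PP}_{j^{\prime}}\PP_m=\widetilde{\PP}_{j^{\prime}}$, so the ranges of $I-\PP_m$ and $\PP_m-\widetilde{\PP}_{j^{\prime}}$ are orthogonal. The splitting $Y_j=\PP_m Y_j+(I-\PP_m)Y_j$ then gives the Pythagorean identity
$$\|(I-\widetilde{\PP}_{j^{\prime}})Y_j\|_{\FF}^2=\|(I-\PP_m)Y_j\|_{\FF}^2+\|(\PP_m-\widetilde{\PP}_{j^{\prime}})Y_j\|_{\FF}^2,$$
whose left-hand side equals $\|\sin\angle(\widetilde{\YY}_{j^{\prime}},\YY_j)\|_{\FF}^2$ and whose first summand is exactly $\|\sin\angle(\WW_m\oplus\ZZ_m,\YY_j)\|_{\FF}^2$. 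The entire task reduces to controlling the second summand by $\frac{\|\PP_m H(I-\PP_m)\|^2}{\min_{i\neq j^{\prime}}|\sigma_j-\theta_i|^2}\,\|(I-\PP_m)Y_j\|_{\FF}^2$.

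For the second summand I would use a Davis--Kahan/Sylvester-equation step. By the Rayleigh--Ritz conditions \eqref{standard}, the compressed operator $K_m:=\PP_m H\PP_m$ leaves both $\widetilde{\YY}_{j^{\prime}}$ and its orthogonal complement inside $\WW_m\oplus\ZZ_m$ invariant, with respective spectra $\{\pm\mi\theta_{j^{\prime}}\}$ and $\{\pm\mi\theta_i:i\neq j^{\prime}\}$. Since $HY_j=Y_j\Omega_j$ with $\Omega_j=\bsmallmatrix{0 & -\sigma_j \\ \sigma_j & 0}$ having spectrum $\{\pm\mi\sigma_j\}$, multiplying on the left by $\PP_m$ yields
$$K_m(\PP_m Y_j)-(\PP_m Y_j)\Omega_j=-\PP_m H(I-\PP_m)Y_j.$$
Projecting this relation by $\PP_m-\widetilde{\PP}_{j^{\prime}}$ gives a Sylvester equation for $(\PP_m-\widetilde{\PP}_{j^{\prime}})Y_j$ whose two operators are normal, have disjoint spectra and are separated in absolute value by at least $\min_{i\neq j^{\prime}}|\sigma_j-\theta_i|$; inverting and bounding the right-hand side by $\|\PP_m H(I-\PP_m)\|\cdot\|(I-\PP_m)Y_j\|_{\FF}$ produces the desired estimate, and plugging it into the Pythagorean identity completes the proof.

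The main obstacle is the Sylvester step with the \emph{correct} separation constant. Because the eigenvalues of both $H$ and $K_m$ are purely imaginary and occur in conjugate pairs $\pm\mi\sigma_j$, $\pm\mi\theta_i$, one must work with the entire real $2$-dimensional block $\widetilde{\YY}_{j^{\prime}}$ rather than trying to isolate $\mi\theta_{j^{\prime}}$ alone, and one has to rule out a spurious small gap caused by the conjugate partner $-\mi\theta_{j^{\prime}}$. The key observations $|\mi\sigma_j-\mi\theta_i|=|\sigma_j-\theta_i|$ and $|\mi\sigma_j+\mi\theta_i|=\sigma_j+\theta_i\geq|\sigma_j-\theta_i|$ show that the effective separation is precisely $\min_{i\neq j^{\prime}}|\sigma_j-\theta_i|$. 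Once the block structure is handled cleanly (most conveniently by unitary diagonalisation of $\Omega_j$ and of the complementary restriction of $K_m$ over $\mathbb{C}$), the remaining Sylvester inversion is routine.
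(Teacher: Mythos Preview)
Your proposal is correct. The reduction to the Euclidean setting via the isometry $M$ and the identification $\angle_B(\cdot,\cdot)=\angle(M\cdot,M\cdot)$ is exactly what the paper does. The difference is only in how the Euclidean inequality
\[
\|\sin\angle(\widetilde{\YY}_{j^{\prime}},\YY_j)\|_{\FF}\leq
\sqrt{1+\frac{\|\PP_mH(I-\PP_m)\|^2}{\min_{i\neq j^{\prime}}|\sigma_j-\theta_i|^2}}\,
\|\sin\angle(\WW_m\oplus\ZZ_m,\YY_j)\|_{\FF}
\]
is obtained: the paper simply invokes Theorem~4.4 of \cite{huang2024skew} as a black box, whereas you supply a self-contained Davis--Kahan/Sylvester argument (Pythagorean splitting, invariance of $\widetilde{\YY}_{j^{\prime}}$ under $K_m=\PP_mH\PP_m$, and the spectral-gap bound for the Sylvester operator with normal coefficients). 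Your handling of the conjugate-pair structure is the crux of that cited result, so in effect you have reproved the lemma the paper relies on. The benefit of your route is that it makes the paper independent of \cite{huang2024skew} at this point and clarifies why the constant is $\min_{i\neq j^{\prime}}|\sigma_j-\theta_i|$ rather than a gap involving $\pm\theta_{j^{\prime}}$; the paper's route is of course shorter.
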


\begin{proof}
From \eqref{standard}, $(\theta_{j^{\prime}},\tilde w_{j^{\prime}}
=M\tilde u_{j^{\prime}},\tilde z_{j^{\prime}}=M\tilde v_{j^{\prime}})$ 
is the Ritz approximation to the singular triplet $(\sigma_j,w_j,z_j)$ 
of $H$ with respect to $\WW_m$ and $\ZZ_m$. 
Denote $\widetilde \YY_{j^{\prime}}=
\spans\{\tilde w_{j^{\prime}},\tilde z_{j^{\prime}}\}
=M\widetilde\XX_{j^{\prime}}$. 
Then Theorem~4.4 of \cite{huang2024skew} indicates that 
\begin{equation}\label{accuv}
	\|\sin\angle(\widetilde\YY_{j^{\prime}},\YY_j)\|_{\FF}\leq
	\sqrt{1+\frac{\|\PP_mH(I-\PP_m)\|^2}
		{\min_{i\neq j^{\prime}}|\sigma_j-\theta_i|^2}}
	\|\sin\angle(\WW_m\oplus\ZZ_m,{\mathcal{Y}}_j)\|_{\FF},	
\end{equation}
where $\PP_m$ is the orthogonal projector onto  
$\WW_m\oplus\ZZ_m=M(\UU_m\oplus_B\VV_m)$. 
Therefore, we obtain \eqref{accbuv} by realizing    
$\angle_B(\widetilde\XX_{j^{\prime}},\XX_j)
\!=\!\angle(\widetilde\YY_{j^{\prime}},\YY_j)$ and  
$\angle_B(\UU_m\!\oplus_B\!\VV_m,\mathcal{X}_j)
\!=\!\angle(\WW_m\!\oplus\!\ZZ_m, {\mathcal{Y}}_j)$.
\end{proof}

\begin{remark}
Theorem~\ref{thm3} extends the a priori error bound for Ritz blocks 
in the real skew-symmetric eigenproblem case 
(cf.~\cite[Theorem~4.1]{huang2024skew}) to the real 
skew-symmetric/symmetric positive 
definite generalized eigenproblem case. 
Combining Theorems~\ref{thm2}--\ref{thm3} and Remark~\ref{remark3}, 
we conclude  that the GSSLBD method favors the eigenspaces
$\XX_j$ and $\XX_{\ell-j+1}$ for $j$ small. 
Moreover, if $\sigma_j$ is well separated from the other Ritz values  
than the $\theta_{j^{\prime}}$, i.e., 
$\min_{i\neq j^{\prime}}|\sigma_j-\theta_i|$ is not small,
then Theorem~\ref{thm3} guarantees that the approximate 
eigenspace $\widetilde\XX_{j^{\prime}}$ 
converges to the exact $\XX_j$ as fast as the latter 
approaches the subspace $\UU_m\oplus_B\VV_m$.
\end{remark}

\begin{theoremmy}\label{thm4}
With the notations of Theorem~\ref{thm3}, assume that the $B$-angles
\begin{equation}\label{assump}
\angle_B(\tilde{u}_{j^{\prime}},u_j)\leq \frac{\pi}{4}
\mbox{\ \ \ and\ \ \ }
\angle_B(\tilde{v}_{j^{\prime}},v_j)\leq \frac{\pi}{4}.
\end{equation}
Then 
\begin{eqnarray}
|\theta_{j^{\prime}}-\sigma_j|&\leq& \sigma_j
\|\sin\angle_B(\widetilde \XX_{j^{\prime}},\mathcal{X}_j)\|^2+
\frac{\sigma_1}{\sqrt{2}}
\|\sin\angle_B(\widetilde \XX_{j^{\prime}},\mathcal{X}_j)\|_{\FF}^2,
\label{ritzerror}\\
|\theta_{j^{\prime}}-\sigma_j|&\leq& (\sigma_1+\sigma_j)
\|\sin\angle_B(\widetilde \XX_{j^{\prime}},\mathcal{X}_j)\|^2. \label{ritzerror2}
\end{eqnarray}
\end{theoremmy}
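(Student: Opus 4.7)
The plan is to reduce the statement to the purely skew-symmetric setting via the whitening transformation $M=B^{\frac{1}{2}}$ and then to derive a concrete identity for the Ritz-value error whose two different estimates yield \eqref{ritzerror} and \eqref{ritzerror2}, respectively. Setting $\tilde w_{j^{\prime}}=M\tilde u_{j^{\prime}}$ and $\tilde z_{j^{\prime}}=M\tilde v_{j^{\prime}}$ makes $(\theta_{j^{\prime}},\tilde w_{j^{\prime}},\tilde z_{j^{\prime}})$ the Ritz approximation of $(\sigma_j,w_j,z_j)$ from $\WW_m$ and $\ZZ_m$; by the definition \eqref{Bcanonical} of the $B$-canonical angles, the vector $B$-angles in \eqref{assump} coincide with $\angle(\tilde w_{j^{\prime}},w_j)$ and $\angle(\tilde z_{j^{\prime}},z_j)$, while the subspace quantities on the right-hand sides of \eqref{ritzerror}--\eqref{ritzerror2} equal $\|\sin\angle(\widetilde\YY_{j^{\prime}},\YY_j)\|$ and $\|\sin\angle(\widetilde\YY_{j^{\prime}},\YY_j)\|_{\FF}$ with $\widetilde\YY_{j^{\prime}}=M\widetilde\XX_{j^{\prime}}$ and $\YY_j=M\XX_j$. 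It therefore suffices to prove both bounds for the real skew-symmetric Ritz approximation of $H$.

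Next, I derive the master identity for $\theta_{j^{\prime}}-\sigma_j$. The Rayleigh--Ritz condition \eqref{standard}, inner-producted with $\tilde w_{j^{\prime}}\in\WW_m$, yields $\theta_{j^{\prime}}=\tilde w_{j^{\prime}}^\TT H\tilde z_{j^{\prime}}$. The biorthogonality $\widehat P_m^\TT\widehat Q_{m+1}=\bm{0}$ ensures that $\widetilde Y_{j^{\prime}}:=[\tilde w_{j^{\prime}},\tilde z_{j^{\prime}}]$ has orthonormal columns, as does $Y_j=[w_j,z_j]$. Decomposing $\widetilde Y_{j^{\prime}}=Y_jC+E$ with $C=Y_j^\TT\widetilde Y_{j^{\prime}}\in\mathbb{R}^{2\times 2}$ and $E=(I-Y_jY_j^\TT)\widetilde Y_{j^{\prime}}$, and using $HY_j=Y_jS_j$ with $S_j=\bsmallmatrix{0 & \sigma_j \\ -\sigma_j & 0}$ together with the consequence $Y_j^\TT H=S_jY_j^\TT$ of $H^\TT=-H$, I observe that the cross terms in $\widetilde Y_{j^{\prime}}^\TT H\widetilde Y_{j^{\prime}}$ vanish, leaving
\begin{equation*}
\widetilde Y_{j^{\prime}}^\TT H\widetilde Y_{j^{\prime}}=C^\TT S_jC+E^\TT HE.
\end{equation*}
Reading off the $(1,2)$-entry yields the master identity $\theta_{j^{\prime}}=\sigma_j\det(C)+(E^\TT HE)_{12}$.

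Finally, I bound the two pieces. Hypothesis \eqref{assump} forces the diagonals of $C$ to obey $c_{11},c_{22}\geq 1/\sqrt 2$, while orthonormality gives $|c_{12}|\leq\sqrt{1-c_{22}^2}$ and $|c_{21}|\leq\sqrt{1-c_{11}^2}$, so $c_{11}c_{22}\geq\tfrac12\geq|c_{12}c_{21}|$ and hence $\det(C)\geq 0$; consequently $\det(C)=\gamma_1\gamma_2$ with $\gamma_1\geq\gamma_2$ the cosines of the canonical angles between $\widetilde\YY_{j^{\prime}}$ and $\YY_j$. Since $\gamma_1\geq\gamma_2$, I have $\sigma_j(1-\gamma_1\gamma_2)\leq\sigma_j(1-\gamma_2^2)=\sigma_j\|\sin\angle(\widetilde\YY_{j^{\prime}},\YY_j)\|^2$. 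For the residual, since $E^\TT HE$ is $2\times 2$ skew-symmetric, $|(E^\TT HE)_{12}|=\frac{1}{\sqrt 2}\|E^\TT HE\|_{\FF}\leq\frac{1}{\sqrt 2}\|E\|\,\|H\|\,\|E\|_{\FF}\leq\frac{\sigma_1}{\sqrt 2}\|E\|_{\FF}^2$, which together with $\|E\|_{\FF}=\|\sin\angle(\widetilde\YY_{j^{\prime}},\YY_j)\|_{\FF}$ produces \eqref{ritzerror}; alternatively, $|(E^\TT HE)_{12}|=\|E^\TT HE\|\leq\sigma_1\|E\|^2=\sigma_1\|\sin\angle(\widetilde\YY_{j^{\prime}},\YY_j)\|^2$ combined with the $\sigma_j\|\sin\angle\|^2$ term yields \eqref{ritzerror2}. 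The main obstacle is controlling the sign of $\det(C)$: without \eqref{assump} the ``diagonal'' error $\sigma_j(1-\det(C))$ could be as large as $2\sigma_j$, and the $\pi/4$ bound is precisely what forces $\det(C)=\gamma_1\gamma_2\geq 0$ through the elementary inequality $c_{11}c_{22}\geq\tfrac12\geq|c_{12}c_{21}|$; everything else is then a careful choice between a Frobenius submultiplicative bound (for \eqref{ritzerror}) and a spectral bound (for \eqref{ritzerror2}) on the $2\times 2$ residual $E^\TT HE$.
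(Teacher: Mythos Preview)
Your proof is correct. Both you and the paper begin with the identical reduction via $M=B^{1/2}$: setting $\tilde w_{j'}=M\tilde u_{j'}$, $\tilde z_{j'}=M\tilde v_{j'}$ converts the $B$-angles into ordinary angles and turns the problem into the accuracy of a Ritz singular triplet of the skew-symmetric $H$. At that point the paper simply invokes Theorem~4.5 of \cite{huang2024skew} as a black box and translates back. You instead supply a complete, self-contained derivation of that cited result: the decomposition $\widetilde Y_{j'}=Y_jC+E$, the observation that skew-symmetry of $H$ together with $HY_j=Y_jS_j$ annihilates both cross terms in $\widetilde Y_{j'}^\TT H\widetilde Y_{j'}$, and the resulting master identity $\theta_{j'}=\sigma_j\det(C)+(E^\TT HE)_{12}$. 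Your treatment of the two pieces---using the $\pi/4$ hypothesis to force $c_{11}c_{22}\geq\tfrac12\geq|c_{12}c_{21}|$ and hence $\det(C)=\gamma_1\gamma_2\geq0$, and then choosing between a Frobenius and a spectral bound on the $2\times2$ skew-symmetric residual $E^\TT HE$---is clean and makes explicit exactly where the angle assumption and the quadratic dependence on $\sin\angle$ originate. The paper's citation hides all of this; your argument is strictly more informative and would stand on its own without access to \cite{huang2024skew}.
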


\begin{proof}
By assumption, $\angle(\tilde w_{j^{\prime}},w_j)
=\angle_B(\tilde u_{j^{\prime}},u_j)\leq\frac{\pi}{4}$ 
and $\angle(\tilde z_{j^{\prime}},z_j)
=\angle_B(\tilde v_{j^{\prime}},v_j)\leq\frac{\pi}{4}$. 
For the accuracy of $\theta_{j^{\prime}}$ as an approximation to 
the singular value $\sigma_j$ of $H$, Theorem~4.5 of 
\cite{huang2024skew} shows that
\begin{eqnarray*}
	|\theta_{j^{\prime}}-\sigma_j|&\leq& \sigma_j
	\|\sin\angle(\widetilde \YY_{j^{\prime}}, {\mathcal{Y}}_j)\|^2+
	\frac{\sigma_1}{\sqrt{2}}
	\|\sin\angle(\widetilde \YY_{j^{\prime}}, {\mathcal{Y}}_j)\|_{\FF}^2, \\
	|\theta_{j^{\prime}}-\sigma_j|&\leq& (\sigma_1+\sigma_j)
	\|\sin\angle(\widetilde\YY_{j^{\prime}}, {\mathcal{Y}}_j)\|^2, 
\end{eqnarray*}
which are equivalent to \eqref{ritzerror}--\eqref{ritzerror2} since
$\angle(\widetilde\YY_{j^{\prime}},{\mathcal{Y}}_j)
=\angle_B(\widetilde\XX_{j^{\prime}},\mathcal{X}_j)$.
\end{proof}

\begin{remark}
	The results of Theorem~\ref{thm7} rely on the 
	assumption \eqref{assump}, which is weak and met soon as $m$ increases. 
	Note that $\|\sin\angle_B(\widetilde\XX_{j^{\prime}},\mathcal{X}_j)\|
	\leq\|\sin\angle_B(\widetilde\XX_{j^{\prime}},\mathcal{X}_j)\|_{\FF}$ 
	in \eqref{ritzerror}--\eqref{ritzerror2}. 
	Therefore, as $m$ increases, the Ritz value $\theta_{j^{\prime}}$ 
	converges to the generalized spectral value $\sigma_j$ of $(A,B)$ 
	twice as fast as the relevant approximate eigenspace 
	$\widetilde \XX_{j^{\prime}}$ approaches the exact $\XX_j$.
\end{remark}

\section{An implicitly restarted GSSLBD algorithm with partial 
	$B$-reorthogonalization}\label{sec:4} 
In this section, we introduce an efficient partial 
reorthogonalization strategy for the GSSLBD process and 
develop an implicitly restarted GSSLBD algorithm.

\subsection{Partial $B$-reorthogonalization}\label{subsec:3} 
In computations, as $m$ increases, 
the left and right generalized Lanczos vectors generated 
by the GSSLBD process loss the $B$-orthogonality and 
$B$-biorthogonality gradually, leading to the ``ghost'' 
phenomena of computed Ritz approximations and delayed 
and even failed convergence. 
For the sake of the numerical reliability and robustness 
of the GSSLBD method, appropriate reorthogonalization 
is absolutely vital.

\begin{theoremmy}\label{thm7} 
Let the bidiagonal $G_m$ and $P_m=[p_1,\ldots,p_m]$, 
$Q_m=[q_1,\ldots,q_{m}]$ be generated by the $m$-step GSSLBD process.   
Provided that $\alpha_i,\ i=1,2,\ldots,m$ and $\beta_i,\ i=1,2,\ldots,m-1$
are not small, and
	\begin{equation}\label{semiorth}
		\max\Big\{\max_{1\leq i<j\leq m} \left|p_i^\TT Bp_j\right|,
		\max_{1\leq i<j\leq m} \left|q_i^\TT Bq_j\right|,
		\max_{1\leq i , j\leq m} \left|p_i^\TT Bq_j\right|\Big\}
		\leq\sqrt{\frac{\epsilon}{m}},
	\end{equation}
where $\epsilon$ is the machine precision.
Then there exist two orthonormal and biorthogonal matrices 
$\widetilde P_m\in\mathbb{R}^{n\times m}$ 
and $\widetilde Q_m\in\mathbb{R}^{n\times m}$ such that 
\begin{equation}\label{pqm}
	\widetilde P_m^\TT H\widetilde Q_{m}=  G_m+\Delta_m,
\end{equation}
where the elements of $\Delta_m$ are $\mathcal{O}(\|H\|\epsilon)$ 
in size with $H$ defined by \eqref{Achek}. 
\end{theoremmy}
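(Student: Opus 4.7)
The plan is to reduce the statement to its counterpart for the ordinary SSLBD process established in \cite{huang2024skew}, via the $M$-transformation between the GSSLBD and SSLBD processes recalled in Section~\ref{sec2}.

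First I would introduce $\hat P_m := M P_m$, $\hat Q_m := M Q_m$, and $\hat q_{m+1} := M q_{m+1}$, and verify directly that these are the basis matrices that the SSLBD process \eqref{LBDmat} applied to $H = M^{-1} A M^{-1}$ with initial vector $\hat q_1 = M q_1$ would produce, using the same scalars $\alpha_i, \beta_i$. This is immediate: premultiplying the GSSLBD recurrences \eqref{LBDmat2} by $M$ reproduces \eqref{LBDmat}, and the columns of $\hat P_m$ and $\hat Q_m$ have unit $2$-norm because those of $P_m$ and $Q_m$ have unit $B$-norm.

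Next I would translate the hypothesis \eqref{semiorth} using the identities $\hat p_i^\TT \hat p_j = p_i^\TT B p_j$, $\hat q_i^\TT \hat q_j = q_i^\TT B q_j$, and $\hat p_i^\TT \hat q_j = p_i^\TT B q_j$: the $B$-inner-product bound becomes exactly the ordinary semi-orthogonality and semi-biorthogonality of $\{\hat p_i\}_{i=1}^m$ and $\{\hat q_j\}_{j=1}^m$ at level $\sqrt{\epsilon/m}$. Combined with the lower bounds on $\alpha_i, \beta_i$ inherited from the hypothesis, this puts us in the exact setting of the SSLBD reorthogonalization analysis in \cite{huang2024skew}. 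Invoking that result produces genuinely orthonormal and biorthogonal matrices $\widetilde P_m, \widetilde Q_m$ with $\widetilde P_m^\TT H \widetilde Q_m = G_m + \Delta_m$ and entries of $\Delta_m$ of size $\mathcal{O}(\|H\|\epsilon)$, which is exactly \eqref{pqm}.

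The nontrivial content is thus absorbed into the cited SSLBD result, whose heart is the Simon-style cancellation phenomenon: a $\mathcal{O}(\sqrt{\epsilon})$ loss of orthogonality in the Lanczos basis is amplified only to $\mathcal{O}(\epsilon)$, rather than $\mathcal{O}(\sqrt{\epsilon})$, in the projected bidiagonal matrix. The underlying mechanism is a QR-type correction $\hat P_m \to \widetilde P_m$, $\hat Q_m \to \widetilde Q_m$ that, when combined with the bidiagonal structure of $G_m$ and the twin recurrences in \eqref{LBDmat}, forces the first-order errors to cancel in $\widetilde P_m^\TT H \widetilde Q_m$. In our setting this delicate step is inherited rather than reproved, so the main obstacle is purely bookkeeping: verifying (i) the identification of the hatted SSLBD recurrences and (ii) the exact correspondence between $B$-inner-product and standard-inner-product semi-orthogonality bounds. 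Both are immediate from $M = B^{1/2}$, so the proof reduces to a direct transfer from the SSLBD case.
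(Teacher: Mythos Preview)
Your proposal is correct and follows essentially the same route as the paper: define $\hat p_j = M p_j$, $\hat q_j = M q_j$ to identify the GSSLBD process with the SSLBD process on $H$, translate \eqref{semiorth} into ordinary semi-orthogonality via $B = M^2$, and then invoke Theorem~5.1 of \cite{huang2024skew}. The paper's proof is exactly this transfer argument, stated more tersely.
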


\begin{proof}
Equivalent to the $m$-step GSSLBD process applied to $(A,B)$, 
the $m$-step SSLBD process performed on $H$ produces the same 
bidiagonal matrix $G_m$ and the Lanczos vectors
$\hat p_1,\dots,\hat p_m$ and $\hat q_1,\dots,\hat q_m$ with 
$\hat p_j=Mp_j$ and $\hat q_j=Mq_j$, $j=1,\dots,m$; see section~\ref{subsec:1}.   
By \eqref{squaredec}, bound \eqref{semiorth} indicates that 
\begin{equation*} 
	\max\Big\{\max_{1\leq i<j\leq m} \left|\hat p_i^\TT \hat p_j\right|,
	\max_{1\leq i<j\leq m} \left|\hat q_i^\TT \hat q_j\right|,
	\max_{1\leq i , j\leq m} \left|\hat p_i^\TT \hat q_j\right|\Big\}
	\leq\sqrt{\frac{\epsilon}{m}}.
\end{equation*} 
Then Theorem~5.1 of \cite{huang2024skew} shows that there exists 
two orthonormal and biorthogonal matrices
 $\widetilde P_m\in\mathbb{R}^{n\times m}$ and 
$\widetilde Q_m\in\mathbb{R}^{n\times m}$ that satisfy 
$\spans\{\widetilde P_m,\widetilde Q_m\}
=\spans\{\hat p_1,\dots,\hat p_m,\hat q_1,\dots,\hat q_m\}$ 
such that \eqref{pqm} is fulfilled. 
\end{proof}

\begin{remark}
	Theorem~\ref{thm7} illustrates that if the generalized 
	Lanczos vectors computed by the $m$-step GSSLBD process 
	are made numerically $B$-orthogonal and $B$-biorthogonal 
	to the level of $\mathcal{O}(\sqrt{\varepsilon})$, 
	then up to roundoff errors the 
	upper bidiagonal matrix $G_m$ is equivalent to the projection 
	matrix $\widetilde P_m^\TT H\widetilde Q_{m}$ of $H$ with 
	respect to the subspaces $\spans\{\widetilde P_m\}$ and 
	$\spans\{\widetilde{Q}_m\}$. 
	As a consequence, as approximations to the singular values of 
	$H$ and generalized spectral values of $(A,B)$, 
	the singular values of $G_m$, i.e., the computed Ritz values, 
	are as accurate as those of $\widetilde P_m^\TT H\widetilde Q_{m}$, i.e., the exact ones, 
	within the error $\mathcal{O}(\|H\|\epsilon)$.
	In other words, the semi-$B$-orthogonality and 
	semi-$B$-biorthogonality of the generalized 
	Lanczos vectors suffice to make the computed and true Ritz 
	values equally accurate in finite precision arithmetic. 
	On the other hand, the full numerical $B$-orthogonality
	and $B$-biorthogonality at the level of $\mathcal{O}(\varepsilon)$ 
	are unnecessary and wasteful for the accurate computation 
	of eigenvalues of $(A,B)$. 
\end{remark}

In what follows, we generalize the two-sided partial reorthogonalization 
strategy, which is commonly adopted in the LBD type SVD methods 
\cite{jia2003implicitly,jia2010refined,larsen2001combining} 
and extended to the SSLBD process for the real skew-symmetric 
eigenvalue and SVD problems \cite{huang2024skew}, 
to the GSSLBD process so as to maintain the desired numerical 
semi-$B$-orthogonality and semi-$B$-biorthogonality of the 
left and right generalized Lanczos vectors. 
To this end, we need to estimate the levels 
of $B$-orthogonality and $B$-biorthogonality of the left and right 
generalized Lanczos vectors to determine whether \eqref{semiorth} 
is fulfilled. 
Once \eqref{semiorth} is wrecked, we use an efficient partial 
$B$-reorthogonalization module to reinstate \eqref{semiorth}. 
Motivated by the recurrences of Larsen \cite{larsen1998lanczos} 
designed for the LBD process and the variant extended to the 
SSLBD process \cite{huang2024skew}, we make full use of the 
GSSLBD process to achieve this goal. 

Specifically, denote by  $\Phi\in\mathbb{R}^{m\times m}$ and 
$\Psi\in\mathbb{R}^{(m+1)\times(m+1)}$ the estimate matrices 
of the level of $B$-orthogonality between the left and right 
generalized Lanczos vectors, respectively, and by 
$\Omega\in\mathbb{R}^{m\times (m+1)}$ the estimate matrix of 
the level of $B$-biorthogonality of these two sets of vectors. 
In other words, their $(i,j)$-elements $\varphi_{ij}\approx p_i^\TT Bp_j$,
$\psi_{ij}\approx q_i^\TT Bq_j$ and $\omega_{ij}\approx p_i^\TT Bq_j$. 
Note that $\varphi_{jj}=\psi_{jj}=1$, $j=1,\dots,m+1$.  
With a fixed $1\leq j\leq m$, by \eqref{LBDmat2},  
we have 
\begin{eqnarray}
\alpha_j\varphi_{ij}=p_i^\TT B(\alpha_jp_j)
&=&p_i^\TT B(B^{-1}Aq_j-\beta_{j-1}p_{j-1})   
= p_i^\TT Aq_j-\beta_{j-1}p_i^\TT Bp_{j-1}  \nonumber \\
&=&(\alpha_iBq_i+\beta_iBq_{i+1})^\TT q_j
-\beta_{j-1}\varphi_{i,j-1} \nonumber \\
&=&\alpha_i\psi_{ij}+\beta_i\psi_{i+1,j}
-\beta_{j-1}\varphi_{i,j-1}, \qquad\qquad\quad (1\leq i<j) \label{BorthPP} \\
\alpha_j\omega_{ji}	=(\alpha_jp_j)^\TT Bq_i
&=&(B^{-1}Aq_j-\beta_{j-1}p_{j-1})^\TT Bq_i
=-q_j^\TT Aq_i-\beta_{j-1}p_{j-1}^\TT Bq_i \nonumber \\
&=&-q_j^\TT (\beta_{i-1}Bp_{i-1}+\alpha_iBp_i)
-\beta_{j-1}\omega_{j-1,i} \nonumber \\
&=&-\beta_{i-1}\omega_{i-1,j}-\alpha_i \omega_{ij}
-\beta_{j-1} \omega_{j-1,i}, \qquad\quad\  (1\leq i\leq j) \label{BorthPQ}
\end{eqnarray} 
where $\omega_{0,1}=\cdots=\omega_{0,j}=0$. 
Following the same derivations as above, we obtain  
\begin{eqnarray}
\qquad\qquad\quad\beta_{j}\psi_{i,j+1}&=&\beta_{i-1}\varphi_{i-1,j}
+\alpha_i\varphi_{ij}-\alpha_j\psi_{i,j},  \qquad\qquad\qquad (1\leq i\leq j)  \label{BorthQQ}\\ 
\qquad\qquad\quad\beta_{j}\omega_{i,j+1}&=& -\beta_i \omega_{j,i+1}
-\alpha_i\omega_{ji}-\alpha_j\omega_{ij},  \qquad\qquad\qquad\  (1\leq i\leq j) \label{BorthQP}
\end{eqnarray}
where $\varphi_{0,i}=0$, $i=1,\dots,j$. 
Exploiting the relationships between the elements in the 
estimate matrices $\Phi$, $\Psi$ and $\Omega$ shown in 
\eqref{BorthPP}--\eqref{BorthQP}, we can efficiently 
calculate these quantities.  

Since $\Phi$ and $\Psi$ are symmetric with diagonal elements being one,
it suffices to compute their strictly upper triangular parts.
Concretely, at the $j$th step of the $m$-step GSSLBD process, 
when a generalized left Lanczos vector $p_j$ is computed, 
based on \eqref{BorthPP}--\eqref{BorthPQ}, 
we calculate the scaled new elements in $\Phi$ and $\Omega$ by 
\begin{eqnarray}
	&&\varphi_{ij}^{\prime}=\alpha_{i}\psi_{ij}+\beta_i
	\psi_{i+1,j}-\beta_{j-1}\varphi_{i,j-1},
	\hspace{3.85em}  
	\varphi_{ij}^{\prime} \hspace{0.11em}\circeq 
	\varphi_{ij}^{\prime}\hspace{0.11em}+
	\sign(\varphi_{ij}^{\prime}\hspace{0.11em})\tilde\epsilon, \label{orthPP} \\
	&&\omega_{ji}^{\prime}=-\alpha_{i}\omega_{ij}
	-\beta_{i-1}\omega_{i-1,j}
	-\beta_{j-1}\omega_{j-1,i},
	\qquad  
	\omega_{ji}^{\prime}\circeq\omega_{ji}^{\prime}
	+\sign(\omega_{ji}^{\prime})\tilde\epsilon, \label{orthPQ}
\end{eqnarray}
where $\circeq$ denotes the update operation, $\sign(\cdot)$ 
is the sign function and 
\begin{equation}\label{varepsilon}
	\tilde{\varepsilon}=\frac{\sqrt{n}\kappa(B)\|H\|}{2}\cdot \epsilon. 
\end{equation} 
Then by \eqref{BorthPP}--\eqref{BorthPQ}, we have $\varphi_{ij}=\varphi_{ij}^{\prime}/\alpha_j$ 
for $1\leq i<j$ 
and $\omega_{ji}=\omega_{ji}^{\prime}/\alpha_j$ for $1\leq i\leq j$. 
Particularly, the $\sign(\cdot)\tilde{\varepsilon}$ terms, 
as done in \cite{huang2024skew,larsen2001combining}, take the 
rounding errors of GSSLBD process into consideration,  
making the resulting $\varphi_{ij}$ and $\omega_{ji}$ 
as large in magnitude as possible and reliable as estimates 
for $p_j^\TT Bp_i$ and $p_j^\TT Bq_i$. 
Basing on the sizes of $\varphi_{ij}$ and $\omega_{ji}$, 
or equivalently, those of $\varphi_{ij}^{\prime}$ and 
$\omega_{ji}^{\prime}$, we determine the index sets 
\begin{equation}\label{orthPPQ}
	\mathcal{I}_{P,P}^{(j)}=\left\{1\leq i< j \big|
	 |\varphi_{ij}^{\prime}|\geq\alpha_j\sqrt{\tfrac{\epsilon}{m}}\right\}, \qquad
	\mathcal{I}_{P,Q}^{(j)}=\left\{1\leq i\leq j\big| 
	 |\omega_{ji}^{\prime}|\geq\alpha_j\sqrt{\tfrac{\epsilon}{m}}\right\},   
\end{equation}
which respectively contain the indices of previous left and right generalized 
Lanczos vectors $p_i$ and $q_i$ that $p_{j}$ has lost the 
semi-$B$-orthogonality and semi-$B$-biorthogonality to. 
Therefore, we generalize the modified Gram--Schmidt (MGS) orthogonalization
procedure \cite{golub2012matrix,saad2003iterative,stewart1998matrix} to the 
$B$-norm case and use the generalized MGS procedure to $B$-reorthogonalize 
$p_j$ first against those $p_i$ with $i\in \mathcal{I}_{P,P}^{(j)}$ and then 
against those $q_i$ with $i\in \mathcal{I}_{P,Q}^{(j)}$. 
 
Particularly, once $p_j$ is $B$-reorthogonalized to a left or right 
generalized Lanczos vector, the quantities $\varphi_{ij}$ and 
$\omega_{ji}$ change correspondingly and therefore need to be updated. 
Fortunately, this can be done efficiently during the generalized MGS 
reorthogonalization. Algorithm~\ref{alg3} summarizes the partial 
$B$-reorthogonalization process for so far, which is inserted 
between steps \ref{skwbld1} and \ref{skwbld2} of Algorithm~\ref{alg2}.  
Specially, we do not form $\varphi_{ij}$ and $\omega_{ji}$ 
explicitly until all the $B$-reorthogonalizations have been implemented  
so as to avoid unnecessary wast of computational cost. 
This is precisely the reason we use $\varphi_{ij}^{\prime}$ 
and $\omega_{ji}^{\prime}$ rather than 
$\varphi_{ij}$ and $\omega_{ji}$
to equivalently determine the index sets $\mathcal{I}_{P,P}^{(j)}$ 
and $\mathcal{I}_{P,Q}^{(j)}$ in \eqref{orthPPQ}. 

\begin{algorithm}[tbph]
	\caption{Partial $B$-reorthogonalization in the GSSLBD process-Part I.}\label{alg3}
	\begin{itemize}[leftmargin = 2em]
		\item[\footnotesize (\ref{skwbld1}.1)\label{reorth31}]
		Compute $\varphi_{1j}^{\prime},\varphi_{2j}^{\prime},\dots,\varphi_{j-1,j}^{\prime}$
		and $\omega_{j1}^{\prime},\omega_{j2}^{\prime},\dots,\omega_{jj}^{\prime}$ exploiting
		\eqref{orthPP}--\eqref{orthPQ},  and determine 
		the index sets $\mathcal{I}_{P,P}^{(j)}$ and $\mathcal{I}_{P,Q}^{(j)}$
		according to \eqref{orthPPQ}.
		
		\item[\footnotesize (\ref{skwbld1}.2)] \textbf{for}
		$i\in\mathcal{I}_{P,P}^{(j)}$ \textbf{do}
		\hspace{12em}  \textcolor{gray}{$\%$ partially 
			$B$-reorthogonalize $p_j$ to $P_{j-1}$}
		\begin{itemize}[leftmargin=1.5em]
			\item[] Compute $\tau=p_i^\TT Bs_j$, overwrite $s_j\circeq s_j-\tau p_i$ 
			and recalculate $\varphi_{ij}^{\prime}=p_i^\TT Bs_j$. 
			\item[] Update $\varphi_{lj}^{\prime} \circeq
			\varphi_{lj}^{\prime}-\tau\varphi_{li}$ ($1\leq l<j$, $l\neq i$)  
			and $\omega_{jl}^{\prime}= \omega_{jl}^{\prime}
			-\tau\omega_{il}$ ($1\leq l\leq j$).
		\end{itemize}
		\item[] \textbf{end for}
		
		\item[\footnotesize (\ref{skwbld1}.3)] \textbf{for}
		$i\in\mathcal{I}_{P,Q}^{(j)}$ \textbf{do}  \hspace{12em}
		\textcolor{gray}{$\%$ partially $B$-reorthogonalize $p_j$ to $Q_{j}$}
		\begin{itemize}[leftmargin=1.5em]
			\item[] Compute $\tau =q_i^\TT Bs_j$, overwrite $s_j=s_j-\tau q_i$ 
			and recalculate $\omega_{ji}^{\prime}=q_i^\TT Bs_j$.  
			\item[] Update
			$\omega_{jl}^{\prime} =\omega_{jl}^{\prime} -\tau\psi_{il}$ ($1\leq l\leq j$, $l\neq i$)  
			and $\varphi_{lj}^{\prime} =\varphi_{lj}^{\prime} -\tau\omega_{li}$ ($1\leq l<j$).
		\end{itemize}
		\item[] \textbf{end for}
		\item[\footnotesize (\ref{skwbld1}.4)] Update $\alpha_j=\|s_j\|_B$ 
		and compute $\varphi_{ij}=\frac{\varphi_{ij}^{\prime}}{\alpha_j}$ for $1\leq i<j$ and 
		$\omega_{ji}=\frac{\omega_{ji}^{\prime}}{\alpha_j}$ for $1\leq i\leq j$.  
	\end{itemize}
\end{algorithm}
  
\begin{algorithm}[tbph]
	\caption{Partial $B$-reorthogonalization in the GSSLBD process-Part II.}\label{alg4}
	\begin{itemize}[leftmargin = 2em]
		\item[\footnotesize (\ref{skwbld3}.1)] Compute
		$\psi_{1j}^{\prime},\psi_{2j}^{\prime},\dots,\psi_{jj}^{\prime}$ and
		$\omega_{1j}^{\prime},\omega_{2j}^{\prime},\dots,\omega_{jj}^{\prime}$ using
		\eqref{orthQQ}--\eqref{orthQP}, 
		and establish the index sets $\mathcal{I}_{Q,Q}^{(j)}$
		and $\mathcal{I}_{Q,P}^{(j)}$ based on \eqref{orthQQP}.
		
		\item[\footnotesize (\ref{skwbld3}.2)] \textbf{for}
		$i\in\mathcal{I}_{Q,Q}^{(j)}$ \textbf{do} \hspace{12em}
		\textcolor{gray}{$\%$ partially $B$-reorthogonalize
			$q_{j+1}$ to $Q_{j}$}
		\begin{itemize}[leftmargin=1.5em]
			\item[] Compute $\tau=q_i^\TT Bt_j$, overwrite $t_j\circeq t_j-\tau q_i$ 
			and recalculate $\psi_{i,j+1}^{\prime}=q_i^\TT Bt_j$. 
			\item[] Update
			$\psi_{l,j+1}^{\prime}\circeq\psi_{l,j+1}^{\prime}\!-\!\tau \psi_{li}$ 
			($1\leq l\leq j$, $l\neq i$) and 
			$\omega_{l,j+1}^{\prime}\circeq\omega_{l,j+1}^{\prime}
			-\tau\omega_{li}$ ($1\leq l\leq j$).
		\end{itemize}
		\item[] \textbf{end for}
		
		\item[\footnotesize (\ref{skwbld3}.3)] \textbf{for}
		$i\in\mathcal{I}_{Q,P}^{(j)}$ \textbf{do}
		\hspace{12em} \textcolor{gray}{$\%$ partially
			$B$-reorthogonalize $q_{j+1}$ to $P_{j}$ }
		\begin{itemize}[leftmargin=1.5em]
			\item[] Compute $\tau=p_i^\TT Bt_j$, overwrite $t_j=t_j-\tau p_i$ 
			and recalculate $\omega_{i,j+1}^{\prime}=p_i^\TT Bt_j$. 
			\item[] Update
			$\omega_{l,j+1}^{\prime}\circeq\omega_{l,j+1}^{\prime}-\tau 
			\varphi_{li}$ ($1\leq l\leq j$, $l\neq i$)  and 
			$\psi_{l,j+1}^{\prime}\circeq\psi_{l,j+1}^{\prime}-\tau \omega_{il}$ 
			($1\leq l\leq j$).
		\end{itemize}
		\item[] \textbf{end for}
		\item[\footnotesize (\ref{skwbld3}.4)] Update $\beta_j=\|t_j\|_B$ and calculate 
		$\psi_{i,j+1}=\frac{\psi_{i,j+1}^{\prime}}{\beta_j}$ 
		and $\omega_{i,j+1}=\frac{\omega_{i,j+1}^{\prime}}{\beta_j}$ 
		for $1\leq i\leq j$.		
	\end{itemize}
\end{algorithm} 

After those, the GSSLBD process proceeds and computes the right generalized 
Lanczos vector $p_{j+1}$.  
According to \eqref{BorthQQ}--\eqref{BorthQP}, we calculate the scaled new 
quantities in $\Psi$ and $\Omega$ by    
\begin{eqnarray}
&&\psi_{i,j+1}^{\prime}=\beta_{i-1}\varphi_{i-1,j}
+\alpha_i\varphi_{ij}-\alpha_j\psi_{ij}, \qquad 
\psi_{i,j+1}^{\prime}\circeq\psi_{i,j+1}^{\prime}
+\sign(\psi_{i,j+1}^{\prime})\tilde\varepsilon, \label{orthQQ}\\
&&\omega_{i,j+1}^{\prime}=-\beta_{i}\omega_{j,i+1}
-\alpha_i\omega_{ji}-\alpha_{j}\omega_{ij},\qquad\hspace{0.05em}
\omega_{i,j+1}^{\prime}\circeq\omega_{i,j+1}^{\prime}
+\sign(\omega_{i,j+1}^{\prime})\tilde\varepsilon. \label{orthQP}
\end{eqnarray} 
Then we have $\psi_{i,j+1}=\psi_{i,j+1}^{\prime}/\beta_j$ and 
$\omega_{i,j+1}=\omega_{i,j+1}^{\prime}/\beta_j$ for $1\leq i\leq j$. 
Based on the sizes of them, we determine the index sets 
\begin{equation}\label{orthQQP}
  \mathcal{I}_{Q,Q}^{(j)}=\left\{1\leq i\leq j \big|
	  |\psi_{i,j+1}^{\prime}|\geq\beta_j\sqrt{\tfrac{\epsilon}{m}}\right\},\quad 
	\mathcal{I}_{Q,P}^{(j)}=\left\{1\leq i\leq j\big|
  |\omega_{i,j+1}^{\prime}|\geq\beta_j\sqrt{\tfrac{\epsilon}{m}}\right\},  
\end{equation}
which include the indices corresponding to the previous right and left 
generalized Lanczos vectors $q_i$ and $p_i$ that $q_{j+1}$ has lost 
semi-$B$-orthogonality and semi-$B$-biorthogonality to, respectively. 
We then adopt the generalized MGS procedure to $B$-reorthogonal 
$q_{j+1}$ first against those $q_i$ with $i\in\mathcal{I}_{Q,Q}^{(j)}$ and then 
against those $p_i$ with $i\in\mathcal{I}_{Q,P}^{(j)}$. 
Meanwhile, we update the quantities $\psi_{lj}$'s and $\omega_{jl}$'s for 
$l=1,\dots,j$ efficiently. 
We summarize these in Algorithm~\ref{alg4}, which is inserted between 
steps~\ref{skwbld3} and \ref{skwbld4} of Algorithm~\ref{alg2}. 
 
So far, we have evolved the $m$-step GSSLBD process with partial 
$B$-reorthogonalization.  
We remark that for $B$ ill conditioned, the applications of $B^{-1}$ 
at steps \ref{skwbld1} and \ref{skwbld3} of algorithm~\ref{alg2} 
should introduce much larger sized backward errors in the computed 
generalized Lanczos vectors than it does for a well conditioned $B$, 
even if the relative residual norms of the underlying linear 
systems with $B$ the coefficient matrix have attained the level 
of working precision, causing that the computed generalized Lanczos 
vectors loss the valid semi-$B$-orthogonality and semi-$B$-orthogonality 
faster. 
For the robustness of the GSSLBD process, one should implement the 
$B$-reorthogonalizations more frequently for an ill conditioned $B$. 
To this end, we have involve the $\kappa(B)$ term when 
introducing $\tilde\varepsilon$ in \eqref{varepsilon}. 
Extensive numerical experiments have confirmed the necessity and 
effectiveness of involving such an $\tilde\varepsilon$ in the 
partial $B$-orthogonalization strategy for the GSSLBD process, 
as we shall see later. 
Moreover, from Algorithms~\ref{alg3}--\ref{alg4} and expressions 
\eqref{orthPP}--\eqref{orthPQ} and \eqref{orthQQ}--\eqref{orthQP}, 
the $j$th step of GSSLBD process consumes $\mathcal{O}(j)$ extra flops 
to calculate the scaled new entries of $\Phi$, $\Psi$ and $\Omega$ 
for the first time, which is neglectable relative to the cost of 
forming matrix-vector multiplications with $A$ and $B^{-1}$, 
each for twice, at this step. 
Additionally, it costs $\mathcal{O}(j)$ extra flops to update them 
after one step of $B$-reorthogonalization, which is 
negligible compared to the $\mathcal{O}(n)$ flops used by 
the $B$-reorthogonalization process. 
In conclusion, we are able to estimate the levels of $B$-orthogonality
and $B$-biorthogonality among the generalized Lanczos vectors  efficiently. 

\subsection{Implicit restart}\label{subsec:4}
When the dimension of subspaces reaches the maximum number 
$m$ allowed, we perform an implicit restart in the GSSLBD process. 
To this end, we need to select $m-k$ shifts, for which we 
take as the undesired Ritz values  
$\mu_1=\theta_{k+1},\dots, \mu_{m-k}=\theta_{m}$ computed during 
\eqref{Ritz} if the largest conjugate eigenpairs in magnitude of 
$(A,B)$ are sought; and if the smallest $k$ ones are wanted, 
we take $\mu_1=\theta_{1},\dots, \mu_{m-k}=\theta_{m-k}$.  
Particularly, if a shift is too close to some of the desired 
generalized spectra values of $(A,B)$, we replace it with a 
more appropriate value. 
We adopt the strategies suggested in 
\cite{jia2003implicitly,larsen1998lanczos}. 
Specifically, for the computation of the largest eigenpairs 
of $(A,B)$, if a shift $\mu$ satisfies 
\begin{equation}\label{badshift1}
\left|(\theta_k-\|r_{\pm k}\|)-\mu\right|\leq{\theta_k}\cdot10^{-3}  
\end{equation}
with $r_{\pm k}$ the residual of the approximate eigenpairs  
$(\tilde\lambda_{\pm k},\tilde x_{\pm k})$ defined by 
\eqref{defres}, we regard it as a bad one and reset it as \emph{zero}; 
as for the computation of the smallest eigenpairs of $(A,B)$, 
we take $\mu$ as a bad shift and reset it as the \emph{largest Ritz value $\theta_1$}  
if it accomplishes
\begin{equation}\label{badshift2}
	\left|(\theta_{m-k+1}+\|r_{\pm (m-k+1)}\|)
	-\mu\right|\leq\theta_{m-k+1} \cdot 10^{-3}.
\end{equation}
 
With these selected and modified shifts, the implicit restart module 
implements a series of Givens rotations on the bidiagonal 
matrix $G_m$ defined in \eqref{LBDmat} from the left and 
right successively, and at the same time accumulates the 
involved orthogonal matrices, which is equivalent to 
performing $m-k$ implicit QR iterations 
\cite{saad2011numerical,stewart2001matrix} on the triangular 
matrix $G_m^\TT G_m$ with the shifts $\mu_1^2,\dots,\mu_{m-k}^2$.  
As a consequence, we obtain two $m$-by-$m$ orthogonal matrices 
$\widetilde C_m$ and $\widetilde D_m$, whose lower bandwidths 
are both $m-k$, and the bidiagonal projection matrix 
$\widetilde G_m =\widetilde C_m^\TT G_m\widetilde D_m$; 
for details, see \cite{bjorck1994implicit,jia2003implicitly,
	jia2010refined,larsen2001combining}. 
Therefore, we first compute the vector 
\begin{equation}\label{restart1}
	\tilde q_{k+1}=\beta_{m}\tilde c_{mk} q_{m+1}
	+ \tilde \beta_{k} Q_m\tilde d_{k+1}, 
\end{equation}
where $\tilde c_{mk}$ and $\tilde \beta_{k}$ are the $(m,k)$- 
and $(k,k+1)$-element of $\widetilde C_m$ and $\widetilde G_m$, 
respectively, and $\tilde d_{k+1}$ is the $(k+1)$th column of 
$\widetilde D_m$.  
Then we update 
\begin{equation}\label{restart2}
	G_{k}\circeq\widetilde G_k,\quad
	P_{k}\circeq P_m\widetilde C_k,\quad
	Q_{k}\circeq Q_m\widetilde D_k,	\quad
	\beta_k\circeq\|\tilde q_{k+1}\|_B,\quad 
	q_{k+1}\circeq  \tilde q_{k+1}/\beta_k,
\end{equation}
where $\widetilde G_k$ is the $k$th leading principal submatrix 
of $\widetilde G_m$, and $\widetilde C_k$ and $\widetilde D_k$ 
consist of the first $k$ columns of $\widetilde C_m$ and 
$\widetilde D_m$, respectively. 
As a result, we have obtained a $k$-dimensional GSSLBD process 
\eqref{LBDmat2} rather than computing it from scratch. 
The whole computation and updating cost $\mathcal{O}(kmn)$ flops in total. 
We then extend the $k$-dimensional GSSLBD process to $m$-step 
in the way described in Section~\ref{sec2}. 
Repeat this process until convergence occurs. 

Remarkably, during the implicit restart, we can efficiently 
update the estimate matrices $\Phi,\Psi$ and $\Omega$ using 
\eqref{restart1}--\eqref{restart2}. 
Specifically, denote by $\Phi^{\prime}\in\mathbb{R}^{k\times k}$, 
$\Psi^{\prime}\in\mathbb{R}^{(k+1)\times (k+1)}$ and 
$\Omega^{\prime}\in\mathbb{R}^{k\times (k+1)}$ 
the new estimate matrices. 
Then their $k\times k$ leading principle matrices, 
denoted by $(\cdot)_{k}$, are computed by
$$
\Phi^{\prime}_k=\widetilde C_k^\TT \Phi_m  \widetilde C_k,\qquad
\Psi_k^{\prime}=\widetilde D_k^\TT \Psi_{m} \widetilde D_k,\qquad
\Omega_k^{\prime}=\widetilde C_k^\TT \Omega_{m}\widetilde D_k,
$$ 
In addition, the $(k+1)$th columns of $\Psi^{\prime}$ 
and $\Omega^{\prime}$, denoted by $(\cdot)_{:,k+1}$, are 
\begin{eqnarray*} 
\Psi_{:,k+1}^{\prime}   &=& \widetilde D_k^\TT 
	(\beta_m\tilde c_{mk}\Psi_{:,m+1}
	+ \tilde \beta_{k}\Psi_{m}\tilde d_{k+1})/\beta_k, \\
\Omega_{:,k+1}^{\prime} &=& \widetilde C_k^\TT 
	(\beta_m\tilde c_{mk}\Omega_{:,m+1}
	+ \tilde \beta_{k}\Omega_{m}\tilde d_{k+1})/\beta_k.
\end{eqnarray*} 
Thus we have obtained all the new estimate matrices by noticing that 
$\Psi^{\prime}$ is symmetric. 
We then overwrite $\Phi\circeq\Phi^{\prime}$, 
$\Psi\circeq\Psi^{\prime}$ and 
$\Omega\circeq\Omega^{\prime}$. 
This updating module consumes $\OO(m^2k)$ flops in total, 
negligible compared to the cost used by the implicit 
restart as $m\ll\ell\leq n/2$.  
 
\subsection{Implicitly restart GSSLBD algorithm with partial 
	$B$-reorthogonalization}\label{subsec:5}
\newcommand{\ee}{{\mathrm{e}}}
As mentioned previously, it is important for the reliability of 
the convergence criterion \eqref{res} and the efficient partial 
$B$-reorthogonalization module (cf.~\eqref{orthPP}--\eqref{orthPQ} 
and \eqref{orthQQ}--\eqref{orthQP}) to compute estimates 
for $\|H\|$, $\|B\|$ and $\kappa(B)$, denoted as 
$\|H\|_\ee$, $\|B\|_\ee$ and $\kappa_\ee(B)$, respectively.  
Fortunately, the largest Ritz value $\theta_1$ computed during the 
GSSLBD method is quite a reasonable approximation to $\|H\|$. 
Therefore, we simply set $\|H\|_\ee = \theta_1$. 
As for $\|B\|$ and $\kappa(B)$, we can use the Matlab built-in 
functions {\sf normest} and  {\sf condest} to estimate the $2$-norm 
and $1$-norm condition number of $B$, and take the results as  
$\|B\|_\ee$ and $\kappa_\ee(B)$, respectively. 
Particularly, if $B$ is given as a function handle in which 
the matrix-vector multiplications with $B$ and $B^{-1}$ are specified, 
then we perform the $m_L$-step Lanczos process on 
$B$ and $B^{-1}$ with a reasonably small $m_L$ to compute approximations 
to their largest eigenvalues, denoted as $\lambda_{\max,\ee}(B)$ and 
$\lambda_{\max,\ee}(B^{-1})$,  
and then we take $\|B\|_e=\lambda_{\max,\ee}(B)$ and 
$\kappa_\ee(B)=\lambda_{\max,\ee}(B)\cdot\lambda_{\max,\ee}(B^{-1})$. 
To be efficient, we take an $m_L=30$.
 
\begin{algorithm}[tbph]
	\caption{Implicitly restarted GSSLBD algorithm with partial $B$-reorthogonalization}
	\renewcommand{\algorithmicrequire}{\textbf{Input: }}
	\renewcommand{\algorithmicensure} {\textbf{Output: }}	
	\begin{algorithmic}[1]\label{alg5}
		\STATE{Initialization: Set $i=0$, $P_0=[\ \ ]$, $Q_1=[q_1]$ and $B_0=[\ \ ]$.}
		\WHILE{$i<i_{max}$}
		
		\STATE {Perform the $m$- or $(m-k)$-step GSSLBD
			process with partial $B$-reorthogonalization.}
		
		\STATE{Compute the SVD \eqref{Ritz} of $G_m$ to obtain its singular 
			triplets $\left(\theta_j,c_j,d_j\right)$, $j=1,\dots,m$.  
			Calculate the residual norms $\|r_{\pm j}\|$ according to 
			\eqref{resieasy}, $j=1,\dots,m$.}
			
		\IF{\label{step5}$\|r_{\pm j}\|\leq \sqrt{\|B\|_\ee}\|H\|_\ee \cdot tol$, $\forall j\in \mathcal{J}$}
		\STATE{\label{step6}Form $\tilde u_j=P_mc_j$ and $\tilde v_j=Q_md_j$, and \textbf{break} the loop.}
		\ENDIF\label{step7}
		
\STATE{Set the implicit restarting shifts $\mu_1,\dots,\mu_{m-k}$ as described in section~\ref{subsec:4}.}
		
\STATE{ Perform the implicit restart with the selected shifts. Set $i=i+1$ 
and goto step 3.}

		\ENDWHILE
	\end{algorithmic}
\end{algorithm}

In Algorithm~\ref{alg5} we sketch the implicitly restarted GSSLBD algorithm with 
partial $B$-reorthogonalization for computing the $2k$ extreme conjugate eigenpairs 
of $(A,B)$, where in steps~\ref{step5} the index set 
$\mathcal{J}=\{1,\dots,k\}$ if the $2k$ largest eigenvalues in magnitude 
are sought and $\mathcal{J}=\{m-k+1,\dots,m\}$ if the smallest $2k$ ones are wanted.  
It demands the devices to form matrix-vector multiplications with $A$, 
$B$ and $B^{-1}$, the number $2k$ of desired eigenpairs, 
and the stopping tolerance $\textit{tol}$ in \eqref{res}. 
Optional parameters include the maximum dimension $m$ of the subspaces, 
the maximum number $i_{\max}$ of total implicit restarts and the $B$-norm 
unit-length starting vector $q_1$, which, by default, are set as $30$, 
$2000$ and $e/\|e\|_B$ or $Ae/\|Ae\|_B$ with $e=[1,\dots,1]$  
for computing the largest and smallest eigenpairs, respectively.

\section{Numerical examples}\label{sec:5}
We report numerical experiments on several matrix pairs to 
illustrate the potential and performance of our implicitly 
restarted GSSLBD algorithm, abbreviate as IRGSSLBD, which 
has been written and coded in the MATLAB language, for the
extreme generalized eigenproblems of large skew-symmetric/symmetric 
positive definite matrix pairs. 
All the experiments were implemented on an Intel (R)
core (TM) i9-10885H CPU 2.40 GHz with the main memory
64 GB and 16 cores using the Matlab R2022b with the
machine precision $\epsilon=2.22\times 10^{-16}$ under
the Microsoft Windows 10 64-bit system. 

We always run IRGSSLBD with the default parameters 
described in  Section~\ref{sec:4}. 
Also, we test the Matlab built-in function {\sf eigs} with 
the same parameters and make a comparison with IRGSSLBD 
to illustrate the overall efficiency of our algorithm.  
When computing the smallest conjugate eigenpairs of $(A,B)$, 
the shift-and-invert approach in {\sf eigs} requires  
the LU factorization of $A$, which may be very demanding 
in computational cost and storage, so that at each step 
a linear equation with $A$ the coefficient matrix can be 
solved directly. In this sense, there is no fair way to 
compare our IRGSSLBD with {\sf eigs}. 
Therefore, when computing the smallest eigenpairs, we only 
report the results of our IRGSSLBD so as to 
demonstrate that it is capable of accomplishing such tasks. 
We record the number of matrix-vector multiplications with $A$ 
or $B^{-1}$, abbreviated as $\#{\rm Mv}$, and the CPU time in 
second, denoted by $T_{\rm cpu}$,
that the two algorithms use to achieve the same stopping tolerance. 
We should remind that a comparison of CPU time used by IRGSSLBD and
{\sf eigs} may be misleading because our IRGSSLBD algorithm 
is coded in pure Matlab language while {\sf eigs} is programmed 
with the advanced and much higher efficient C/C++ language.
Nevertheless, we aim to illustrate that IRGSSLBD is
really fast and competitive with {\sf eigs} even in terms of CPU time.

\begin{exper}\label{exper1}
We compute $20$ largest in magnitude eigenvalues and the corresponding 
eigenvectors of $(A,B)=(\mathrm{plsk1919},T_n(\rho,\delta))$ , 
where $plsk1919$ is a $1919$-by-$1919$ realistic real skew-symmetric 
matrix from the University of Florida Sparse Matrix Collection
\cite{davis2011university}, and the symmetric positive definite Toeplitz matrix
\begin{equation}\label{defTn}
T_n(\rho,\delta) = \begin{bmatrix}\rho&\delta&&\\
	\delta&\ddots&\ddots&\\&\ddots&\ddots&\delta
	\\&&\delta&\rho\end{bmatrix}\in\mathbb{R}^{n\times n}	
\end{equation} 
with $\rho>2\delta$ is some discrete smoothing norm matrix that is 
widely used in linear discrete ill-posed problems with general-form 
of the Tikhonov regularization; see, e.g., \cite[pp.12--3]{hansen1998rank}. 
Taking $\delta=1$ and $\rho = 3$ and $2.000001$ respectively, we obtain 
two problems.  
\end{exper}

\begin{table}[tbhp]
	\caption{Results on $(A,B)=(\mathrm{plsk1919},T(\rho,\delta))$ with $\delta=1$.}\label{table1}
	\begin{center}
		\begin{tabular}{ccccccc} \toprule
			\multirow{2}{*}{\ \ $\rho$ \ \ } 
			&\multirow{2}{*}{\ \ $\kappa(B)$ \ \ }
			&\multirow{2}{*}{\ \ $\gap(10)$ \ \ } 
			&\multicolumn{2}{c}{{\sf eigs}}
			&\multicolumn{2}{c}{IRGSSLBD}  \\
			  \cmidrule(lr){4-5} \cmidrule(lr){6-7}
			& & 
			&\ \  $\#$Mv\ \  &\ \ \ $T_{\mathrm{cpu}}$\ \ \ \ \
			&\ \  $\#$Mv\ \  &\ \ \ $T_{\mathrm{cpu}}$\ \ \ \ \ \\ \midrule
			$3$ &5.00 &9.42e-3 &152 &2.13e-2 &110 &2.94e-2 \\
			$2.000001$ 	&1.09e+6 &5.60e-2 &50 &8.07e-3 &60 &1.48e-2 \\ 
			\bottomrule
		\end{tabular}
	\end{center}
\end{table}

Table~\ref{table1} displays some basic properties of these two problems 
as well as the $\#$MV and CPU time used by IRGSSLBD and {\sf eigs} to 
achieve the same convergence, where, by Theorem~\ref{thm2}, the parameter
\begin{equation*} 
\gap(k) := \min_{1\leq j\leq k}\frac{|\lambda_j^2-\lambda_{j+1}^2|}{|\lambda_{j+1}^2-\lambda_{\ell}^2|}
=\min_{1\leq j\leq k}\frac{\sigma_j^2-\sigma_{j+1}^2}{\sigma_{j+1}^2-\sigma_{\ell}^2}	
\end{equation*}
indicates that IRGSSLBD can converge pretty fast for both problems, 
as confirmed by the followed computational results. 
For the first matrix pair $(A,B)$, $B$ is well conditioned. 
We observe that both algorithms succeed to compute the desired 
eigenvalues of $(A,B)$ with high accuracy, i.e., the relative 
errors at the level of $\mathcal{O}(10^{-14})$, and the corresponding 
eigenvectors with the errors at the level of $\mathcal{O}(10^{-8})$. 
Between them, IRGSSLBD outperforms {\sf eigs} by using $27.63\%$ 
fewer $\#$MV, i.e., iterations, and similar CPU time. 
For the second problem, $B$ is ill conditioned and both algorithms 
compute the desired eigenvalues with lower accuracy, i.e., the relative 
errors at the level of $\mathcal{O}(10^{-12})$, and the associated 
eigenvectors still at the level of $\mathcal{O}(10^{-8})$. 
IRGSSLBD uses comparable CPU time to {\sf eigs} even thought 
consuming slightly more $\#$MV than the latter. 
In this sense, IRGSSLBD is comparable to {\sf eigs} for this problem. 

Particularly, we have also tested these two problems using the IRGSSLBD 
algorithm with full $B$-reorthogonalization, where each computed 
generalized Lanczos vector has been $B$-reorthogonalized to all the 
previous ones so that all those vectors are kept $B$-reorthogonal 
to the level of working precision. 
We observe that for both problems, IRGSSLBD with partial 
and full $B$-reorthogonalization can compute all the desired 
eigenvalues equally accurately using the same $\#$MV. 
However, with partial $B$-reorthogonalization, IRGSSLBD 
implements only $758$ and $1194$ $B$-reorthogonalizations 
in total for the first and second problems, respectively, 
significantly fewer than the $3861$ and $1830$ ones used by 
IRGSSLBD with full $B$-reorthogonalization. 
This illustrates the effectiveness and efficiency of the 
partial $B$-reorthogonalized strategy designed for the GSSLBD process.  
Moreover, just as mentioned at the end of section~\ref{subsec:3}, 
we observe that the partial $B$-reorthogonalization strategy proposed 
indeed involves relatively more frequent $B$-reorthogonalizations 
for ill conditioned $B$. 

\begin{exper}\label{exper2}
We compute $20$ absolutely largest eigenvalues and the corresponding 
eigenvectors of the $n$-by-$n$ matrix pair $(A,B)$ with  
\begin{eqnarray*} 
	A&=&I_{j}\otimes I_{j}\otimes S_{j}(\upsilon_1)+
	I_{j} \otimes S_{j}(\upsilon_2)\otimes I_{j}+
	S_{j}(\upsilon_3)\otimes I_{j}\otimes I_{j},  \\
	B&=&I_{j}\otimes I_{j}\otimes T_{j}(\rho,\delta)	+
	I_{j} \otimes T_{j}(\rho,\delta)	\otimes I_{j}+
	T_{j}(\rho,\delta)	\otimes I_{j}\otimes I_{j}, 
\end{eqnarray*}
where $n=j^3$, $T_j(\rho,\delta)$ is defined by \eqref{defTn} and 
\begin{equation}\label{defSj}
	S_j(\upsilon)=\begin{bmatrix}
		0&\upsilon&&\\
		-\upsilon&\ddots&\ddots&\\
		&\ddots&\ddots&\upsilon\\&&-\upsilon&0
	\end{bmatrix} \in\mathbb{R}^{j\times j}. 	
\end{equation}
Such $A$ comes from a realistic problem of \cite[section 11]{greif2016} 
that can be explained as the approximate finite difference 
discretization of a constant convective term on the unit cube 
$(0, 1) \times  (0, 1) \times  (0, 1)$ in three dimensions, 
and $B$ is the corresponding discrete smoothing norm matrix.
Taking $\upsilon_1 =0.4, \upsilon_2 =0.5, \upsilon_3 =0.6$, 
$j = 32$ and $64$, respectively,  and $\delta=1$, $\rho=3$ 
and $2.000001$, respectively, we obtain two $32768$-by-$32768$ 
generalized eigenproblems and two $262144$-by-$262144$ problems.  
\end{exper} 
 
\begin{table}[tbhp]
	\caption{Results of Experiment~\ref{exper2}.}\label{table2}
	\begin{center}
		\begin{tabular}{cccccccc} \toprule
			\multirow{2}{*}{$\rho$} &\multirow{2}{*}{$n$} 
			&\multirow{2}{*}{\ \ $\kappa(B)$\ \ }
			&\multirow{2}{*}{\ \ $\gap(10)$\ \ } 
			&\multicolumn{2}{c}{{\sf eigs}}
			&\multicolumn{2}{c}{IRGSSLBD}  \\
			\cmidrule(lr){5-6} \cmidrule(lr){7-8}
			&& & 
			&\ \  $\#$Mv\ \  &\ \ \ $T_{\mathrm{cpu}}$\ \ \ \ \
			&\ \  $\#$Mv\ \  &\ \ \ $T_{\mathrm{cpu}}$\ \ \ \ \ \\ \midrule 
			\multirow{2}{*}{$3$} &$32768$ &4.95  &6.39e-4  &880  &12.8 &386  &6.60    \\ 
			 &$262144$ 	&4.99  &1.57e-4  &2770  &7.82e+2 &744  &2.46e+2   \\ \midrule  
			\multirow{2}{*}{$2.000001$} &$32768$  &4.41e+2  &1.60e-3  &102  &1.73 &94  &2.31   \\ 
		 &$262144$ 	&1.71e+3  &4.10e-4  &110  &39.9 &84  &81.3   \\
			\bottomrule
		\end{tabular}
	\end{center}
\end{table}
 
Table~\ref{table2} reports the basic properties of these four problems 
along with the computational results of the two algorithms. 
As we can see, for $\rho=3$, the $B$'s are well conditioned and the 
desired eigenvalues of $(A,B)$ are highly clustered each other. 
Therefore, the two algorithms uses numerous iterations to converge. 
Specifically, for $n = 32768$, IRGSSLBD reduces $56.14\%$ $\#$Mv 
and $48.44\%$ CPU time from {\sf eigs}. 
The advantage is more obvious for $n=262144$ as the former algorithm 
only consumed $26.86\%$ of the $\#$Mv and $31.46\%$ of the CPU time 
used by the latter one to achieve the same convergence. 
Clearly, IRGSSLBD outmatches {\sf eigs} substaintially for these two problems. 

For $\rho=2.000001$, the $B$'s are ill conditioned but the desired 
eigenvalues of $(A,B)$ are slightly better separated than those of 
the first two problems and only some of them are clustered. 
Consequently, both algorithms converge fast. 
Compared to {\sf eigs}, IRGSSLBD reduces $7.84\%$ and $23.64\%$ $\#$Mv 
for $n=32768$ and $262144$, respectively.
We remark that $A$ of these two problems are 
highly sparse and even sparser than $B$.
The gain of IRGSSLBD in fewer $\#$MV is compromised by the extra 
cost of performing $B$-reorthogonalizations. 
As a consequence, it uses $33.53\%$ and $103.8\%$ more CPU time than {\sf eigs}, respectively. 
Nevertheless, for these two problems IRGSSLBD is slightly superior 
to {\sf eigs} regarding total iterations. 
 
\begin{exper}\label{exper3}
	For the matrix split-based iterative linear solver for $Cx=b$ 
	where $C$ is split as the sum of its skew-symmetric part 
	$A=\frac{C-C^T}{2}$ and symmetric part $B=\frac{C+C^T}{2}$, 
	the updating formula for the approximate solution is 
	$x_{j+1}=-B^{-1}Ax_{j}+B^{-1}b$. 
	It is important to determine the spectral radius of $B^{-1}A$, 
	i.e., that of $(A,B)$, for predicting the convergence behavior 
	of the method.  
	We select three matrices $C=\mathrm{cage10}$, $\mathrm{memplus}$ 
	and $\mathrm{viscorocks}$ from \cite{davis2011university}  
	and compute $20$ largest eigenpairs in magnitude of the split 
	matrix pairs $(A,B)=\left(\frac{C-C^T}{2},\frac{C+C^T}{2}\right)$. 
\end{exper} 

\begin{table}[tbhp]
	\caption{Results of Experiment~\ref{exper3}, where $nnz$ denotes 
		the total number of nonzero elements of $A$ and $B$.}\label{table3}
	\begin{center}
		\begin{tabular}{ccccccccc} \toprule
			\multirow{2}{*}{$C$} &\multirow{2}{*}{$n$} &\multirow{2}{*}{$nnz$}
			&\multirow{2}{*}{$\kappa(B)$}
			&\multirow{2}{*}{$\gap(10)$} 
			&\multicolumn{2}{c}{{\sf eigs}}
			&\multicolumn{2}{c}{IRGSSLBD}  \\
			\cmidrule(lr){6-7} \cmidrule(lr){8-9}
			&&& & 
			&$\#$Mv&$T_{\mathrm{cpu}}$
			&$\#$Mv&$T_{\mathrm{cpu}}$\\ \midrule 
			$\mathrm{cage10}$ &11397&265621&12.0&1.18e-2&170&2.60&124&2.81 \\
			$\mathrm{memplus}$ &17758&143508&1.29e+5 &2.40e-7&71058\!&42.6&230&0.70 \\
			$\mathrm{viscorocks}$&37762&1923224&9.77e+11 &3.06e-2&122&0.95&60&0.96 \\		 
			\bottomrule
		\end{tabular}
	\end{center}
\end{table}
 
Table~\ref{table3} displays the basic properties of these 
problems and the computational results of IRGSSLBD and {\sf eigs}. 
We observe very similar phenomenon to the previous experiments. 
As can be seen, for $C\!=\!\mathrm{cage10}$ 
 and $\mathrm{viscorocks}$, the desired 
eigenvalues of $(A,B)$ are quite well separated from each 
other and both algorithms converge fast. 
IRGSSLBD surpasses {\sf eigs} slightly for $C\!=\!\mathrm{cage10}$ 
and considerably for $C\!=\!\mathrm{viscorocks}$ in the sense of 
using $27.06\%$ and $50.82\%$ fewer $\#$MV and competitive CPU 
time compared to the latter, respectively.  
For $C\!=\!\mathrm{memplus}$, the desired eigenvalues of $(A,B)$ 
are extremely clustered each other and {\sf eigs} consumes numerous 
$\#$MV and long CPU time trying to compute the associated eigenpairs 
yet fails and wrongly outputs eight eigenpairs with zero eigenvalues. 
On the contrary, IRGSSLBD converges  fast and correctly and depletes only 
$3.24\mbox{\textperthousand}$ $\#$MV and $1.64\%$ CPU time of {\sf eigs}. 
Evidently, IRGSSLBD vanquishes {\sf eigs} significantly for this problem.

\begin{exper}\label{exper4}
	We compute $10$ eigenpairs associated with the $10$ absolutely smallest 
	eigenvalues of $(A,B)=(S_n(1),T_n(3,1))$   
	with $n=10000$ and $T_n$ and $S_n$  defined by \eqref{defTn}--\eqref{defSj}.     
\end{exper} 

For this problem, we have $\kappa(B)=5.00$ and $\gap(5)=1.10e-7$, 
where $\gap(k)$ for the $2k$ absolutely smallest eigenvalues of 
$(A,B)$ is defined by 
\begin{equation}\label{gapks}
	\gap(k) := \min_{1\leq j\leq k}\frac{|\lambda_{\ell-j+1}^2-\lambda_{\ell-j}^2|}{|\lambda_{\ell-j}^2-\lambda_{1}^2|}=
	\min_{1\leq j\leq k}\frac{\sigma_{\ell-j}^2-\sigma_{\ell-j+1}^2}{\sigma_{1}^2-\sigma_{\ell-j}^2},
\end{equation} 
the size of which, as shown by Theorem~\ref{thm2}, critically 
affects the performance of IRGSSLBD for computing the corresponding 
$2k$ eigenpairs; the larger $\gap(k)$ is, generally the faster 
IRGSSLBD converges. 
Such an extremely small $\gap(5)$ indicates that the task of 
this problem is extraordinarily hard to accomplish.   
Nonetheless, we observe that IRGSSLBD manages to compute 
all the desired eigenpairs of $(A,B)$, though at the cost of 
numerous, i.e., $310046$, $\#$MV and long CPU time, i.e., $169$ seconds. 

In summary, by selecting appropriate starting vectors, IRGSSLBD 
suits well for computing several extremal eigenpairs of a 
skew-symmetric/symmetric positive definite matrix pair. 
And it is more robust than {\sf eigs} with generally higher 
and even much higher overall efficiency.  

\section{Conclusions}\label{sec:6}
For a skew-symmetric/symmetric positive definite matrix pair $(A,B)$, 
we have proposed the GSSLBD process that produces two sets of $B$-orthonormal 
and $B$-biorthogonal generalized Lanczos vectors and a sequence of bidiagonal 
projection matrices of $A$ onto the generated Krylov subspaces with respect to $B$-norm. 
Based on this, we have developed a Rayleigh--Ritz 
projection-based GSSLBD method to compute several extreme conjugate  
eigenpairs of $(A,B)$ in  real arithmetic. 
We have made a rigorous theoretical analysis on the GSSLBD process and 
presented convergence results on how fast an eigenspace associated with 
a concerned conjugate eigenvalue pair of $(A,B)$ approaches the $B$-direct 
sum of the two Krylov subspaces. 
In terms of the $B$-distance between those two subspaces, we have 
established a priori error bounds for the computed conjugate 
approximate eigenvalues and the corresponding approximate eigenspaces. 
The results demonstrate that the GSSLBD method favors extreme 
eigenvalues and the corresponding eigenspaces. 

In finite precision arithmetic, we have proved that the 
semi-$B$-orthogonality and semi-$B$-biorthogonality of the left 
and right generalized Lanczos vectors produced by the GSSLBD 
process suffice to compute the eigenvalues of $(A,B)$ accurately. 
We designed an efficient and valid strategy 
to track the levels of $B$-orthogonality and $B$-biorthogonality 
between these vectors and, based on that, proposed an 
effective partial $B$-reorthogonalization scheme for the GSSLBD 
process to keep the desired semi-$B$-orthogonality and semi-$B$-biorthogonality.
To be practical, we have adapt the implicit restart technique to 
the GSSLBD process with partial $B$-reorthogonalization and developed 
an implicit restart GSSLBD algorithm to compute several extreme conjugate 
eigenvalues and the associated eigenvectors 
of a large skew-symmetric/symmetric positive definite matrix pair. 
Numerical experiments have illustrated the robustness and 
efficiency of the implicit restart GSSLBD algorithm. 

\section*{Declarations}

The author declares that she has no financial interests, 
and she read and approved the final manuscript.
The algorithmic Matlab code is available upon reasonable 
request from the author.

\bibliographystyle{siamplain}
\bibliography{gsslbdep}
\end{document}